\newtheorem{theorem}{Theorem}[section]
\newtheorem{proposition}[theorem]{Proposition}
\newtheorem{lemma}[theorem]{Lemma}
\newtheorem{corollary}[theorem]{Corollary}
\theoremstyle{definition}
\theoremstyle{remark}
\numberwithin{equation}{section}
\newcommand{\cB}{\mathcal{B}}
\newcommand{\cM}{\mathcal{M}}
\newcommand{\cO}{\mathcal{O}}
\newcommand{\cP}{\mathcal{P}}
\newcommand{\gX}{\mathfrak{X}}
\renewcommand{\gg}{\mathfrak{g}}
\newcommand{\gp}{\mathfrak{p}}
\newcommand{\gq}{\mathfrak{q}}
\def\revddots{\mathinner{\mkern1mu\raise\p@
\vbox{\kern7\p@\hbox{.}}\mkern2mu
\raise4\p@\hbox{.}\mkern2mu\raise7\p@\hbox{.}\mkern1mu}}
\begin{document}

\title{A duality for the double fibration transform}

\author{Michael G. Eastwood}
\address{Mathematical Sciences Institute, Australian National University,
ACT 0200,\newline Australia}
\curraddr{}
\email{meastwoo@member.ams.org}
\thanks{MGE: Research supported by the Australian Research Council} 

\author{Joseph A. Wolf}
\address{Department of Mathematics, University of California,
Berkeley, CA 94720--3840, USA}
\curraddr{}
\email{jawolf@math.berkeley.edu}
\thanks{JAW: Research partially supported by NSF Grant DMS 99-88643}

\subjclass[2010]{Primary 32L25; Secondary 22E46, 32L10}
\date{}

\begin{abstract}
We establish a duality within the spectral sequence that governs the
holomorphic double fibration transform. It has immediate application to the
questions of injectivity and range characterization for this transform. We
discuss some key examples and an improved duality that holds in the Hermitian
holomorphic case.
\end{abstract}

\maketitle

\section{Double fibrations}\label{double_fibrations}
In this article we shall always work in the holomorphic category. By a 
{\em double fibration\/} we shall mean a diagram of the form
\begin{equation}\label{gen_doublefibration}
\raisebox{-20pt}{\begin{picture}(100,40)
\put(20,5){\makebox(0,0){$D$}}
\put(50,35){\makebox(0,0){$\gX_D$}}
\put(80,5){\makebox(0,0){$\cM_D$}}
\put(43,28){\vector(-1,-1){16}}
\put(57,28){\vector(1,-1){16}}
\put(33,23){\makebox(0,0){$\mu$}}
\put(68,23){\makebox(0,0){$\nu$}}
\end{picture}}
\end{equation}
where
\begin{equation}\label{conditions}
\begin{minipage}{320pt}\mbox{}\vspace{-10pt}
\begin{itemize}
\item $D$, $\gX_D$, and $\cM_D$ are complex manifolds;
\item $\mu$ is a holomorphic submersion with contractible fibres;
\item $\nu$ is a holomorphic submersion with compact fibres;
\item $\gX_D\xrightarrow{\,(\mu,\nu)\,}D\times\cM_D$ is a holomorphic 
      embedding;
\item $\cM$ is a contractible Stein manifold.
\end{itemize}
\end{minipage}
\end{equation}
Examples of double fibrations arise naturally as follows. Let $G$ be a complex
semisimple (or even reductive) Lie group. There is a beautiful class of complex
homogeneous spaces $Z = G/Q$ that can be characterized by any of the following
equivalent conditions (see e.g.~\cite{FHW} for details).
\begin{itemize}
\item $Z$ is a compact complex manifold; 
\item $Z$ is a compact K\"ahler manifold;
\item $Z$ is a complex projective variety;
\item $Q$ is a parabolic subgroup of $G$. 
\end{itemize}
We shall refer to such compact complex homogeneous spaces $Z$ as {\em complex
flag manifolds}. Now fix a complex flag manifold $Z=G/Q$ and consider a real
form $G_0$ of~$G$. Then it is known~\cite{W} that the natural action of $G_0$
on $Z$ has only finitely many orbits and so there is at least one open orbit.
If $G_0$ is compact, then it acts transitively on $Z$ and there are few other 
exceptional cases when this happens. Otherwise, an open $G_0$-orbit 
$D\subsetneq Z$ is known as a {\em flag domain\/}. As a simple example, let 
us take $G={\mathrm{SL}}(4,{\mathbb{C}})$ acting on $Z={\mathbb{CP}}_3$ in the 
usual fashion, namely 
$${\mathrm{SL}}(4,{\mathbb{C}})\times{\mathbb{CP}}_3\ni(A,[z])\mapsto[Az]\in
{\mathbb{CP}}_3,$$
where $z\in{\mathbb{C}}^4$ is regarded as a column vector. If we take
$$\left[\begin{matrix}\ast\\ 0\\0 \\0\end{matrix}\right]\in{\mathbb{CP}}_3
\enskip\mbox{as basepoint, then}\enskip
Q=\left\{\left[\begin{matrix}\ast&\ast&\ast&\ast\\
0&\ast&\ast&\ast\\
0&\ast&\ast&\ast\\
0&\ast&\ast&\ast\end{matrix}\right]\in{\mathrm{SL}}(4,{\mathbb{C}})\right\}.$$
If we take $G_0={\mathrm{SU}}(2,2)$, defined as preserving the Hermitian form
\begin{equation}\label{hermitian}
\langle w,z\rangle\equiv w_1\overline{z_1}+w_2\overline{z_2}
-w_3\overline{z_3}-w_4\overline{z_4}\end{equation}
on~${\mathbb{C}}^4$, then
$$D={\mathbb{CP}}_3^+
\equiv
\{[z]\in{\mathbb{CP}}_3\mid|z_1|^2+|z_2|^2-|z_3|^2-|z_4|^2>0\}$$
is a flag domain for the action of $G_0$ on~$Z$. 

In general, fixing $K_0\subset G_0$ a maximal compact subgroup, it is known
\cite{W} that there is just one $K_0$-orbit $C_0$ in~$D$ that is a complex
submanifold of~$Z$. We regard $C_0$ as the basepoint of the {\em cycle space\/}
$$\cM_D\equiv\mbox{connected component of $C_0$ in }
\{gC_0\mid g\in G\mbox{ and }gC_0\subset D\}$$
of~$D$. Evidently, $\cM_D$ is an open subset of 
$$\cM_Z\equiv\{gC_0\mid g\in G\}=G/J,\enskip\mbox{ where }
J\equiv\{g\in G\mid gC_0=C_0\}$$
and hence is a complex manifold. Let us set
$$\gX_Z\equiv G/(Q\cap J)\quad\mbox{and}\quad 
\gX_D\equiv\{(z,C)\in D\times\cM_D\mid z\in C\}.$$
Then 
\begin{equation}\label{holistic}\raisebox{-20pt}{\begin{picture}(100,40)
\put(20,5){\makebox(0,0){$D$}}
\put(50,35){\makebox(0,0){$\gX_D$}}
\put(80,5){\makebox(0,0){$\cM_D$}}
\put(43,28){\vector(-1,-1){16}}
\put(57,28){\vector(1,-1){16}}
\put(33,23){\makebox(0,0){$\mu$}}
\put(68,23){\makebox(0,0){$\nu$}}
\end{picture}}{}^{\mathrm{open}}\subset
\raisebox{-20pt}{\begin{picture}(100,40)
\put(20,5){\makebox(0,0){$Z$}}
\put(50,35){\makebox(0,0){$\gX_Z$}}
\put(80,5){\makebox(0,0){$\cM_Z$}}
\put(43,28){\vector(-1,-1){16}}
\put(57,28){\vector(1,-1){16}}
\end{picture}}\end{equation}
and it is known for any flag domain (see e.g.~\cite{FHW} for details) that all
the conditions (\ref{conditions}) of a double fibration are satisfied.

In our example, we may take 
\begin{equation}\label{maxcompact}
K_0={\mathrm{S}}({\mathrm{U}}(2)\times{\mathrm{U}}(2))=
\left\{\left[\begin{matrix}\ast&\ast&0&0\\
\ast&\ast&0&0\\
0&0&\ast&\ast\\
0&0&\ast&\ast\end{matrix}\right]\in{\mathrm{SU}}(2,2)\right\}\end{equation}
whence 
$$C_0=\left\{\left[\begin{matrix}\ast\\ \ast\\ 0\\ 0
\end{matrix}\right]\in{\mathbb{CP}}_3\right\},\quad
J=\left\{\left[\begin{matrix}\ast&\ast&\ast&\ast\\
\ast&\ast&\ast&\ast\\
0&0&\ast&\ast\\
0&0&\ast&\ast\end{matrix}\right]\in{\mathrm{SL}}(4,{\mathbb{C}})\right\},$$
and $\cM_Z={\mathrm{Gr}}_2({\mathbb{C}}^4)$, the Grassmannian of $2$-planes 
in~${\mathbb{C}}^4$. The base cycle $C_0$ and, therefore, every other cycle is 
intrinsically a Riemann sphere~${\mathbb{CP}}_1$. Geometrically, 
$$\cM_D=\{\Pi\in{\mathrm{Gr}}_2({\mathbb{C}}^4)\mid
\langle\underbar{\enskip},\underbar{\enskip}\rangle|_\Pi\mbox{ is
positive definite}\}\equiv{\mathbb{M}}^{++}$$
and analytically we may realize $\cM_D$ as a convex tube domain 
in~${\mathbb{C}}^4$
$$\cM_D\cong\{\zeta=x+iy\in{\mathbb{C}}^4\mid
x_1{}^2>x_2{}^2+x_3{}^2+x_4{}^2\mbox{ and }x_1>0\}$$
by means of
$${\mathbb{C}}^4\ni(\zeta_1,\zeta_2,\zeta_3,\zeta_4)\longmapsto\Pi\equiv
{\mathrm{span}}\left\{\left[\begin{matrix}
1+\zeta_1+\zeta_2\\ \zeta_3+i\zeta_4\\ 1-\zeta_1-\zeta_2\\ -\zeta_3-i\zeta_4
\end{matrix}\right],\left[\begin{matrix}
\zeta_3-i\zeta_4\\ 1+\zeta_1-\zeta_2\\ -\zeta_3+i\zeta_4\\ 1-\zeta_1+\zeta_2
\end{matrix}\right]
\right\}.$$
Notice that, in this particular case, the cycle space $\cM_D$ is itself a flag 
domain (for the action of ${\mathrm{SU}}(2,2)$ on 
${\mathrm{Gr}}_2({\mathbb{C}}^4)$). This is unusual.

For our second example, let us start with another of the open orbits of 
${\mathrm{SU}}(2,2)$ on ${\mathrm{Gr}}_2({\mathbb{C}}^4)$, namely 
$$D=\{\Pi\in{\mathrm{Gr}}_2({\mathbb{C}}^4)\mid
\langle\underbar{\enskip},\underbar{\enskip}\rangle|_\Pi\mbox{ is
strictly indefinite}\}\equiv{\mathbb{M}}^{+-}.$$
With the same choice (\ref{maxcompact}) of maximal compact subgroup~$K_0$, the 
base cycle $C_0$ is
$$\left\{\Pi\in{\mathrm{Gr}}_2({\mathbb{C}}^4)\mid
\Pi={\mathrm{span}}\{\alpha,\beta\}\mbox{ for some }\Big\{\!\begin{array}{l}
\alpha\mbox{ of the form }[\ast,\ast,0,0]^t\\
\beta\mbox{ of the form }[0,0,\ast,\ast]^t
\end{array}\right\}.$$
Hence the base cycle and, therefore, every other cycle is intrinsically
${\mathbb{CP}}_1\times{\mathbb{CP}}_1$. By definition, we always have
$J\supseteq K$, the complexification of~$K_0$, but often they are equal this is
the case here. The cycle space $\cM_D$ is
${\mathbb{M}}^{++}\times{\mathbb{M}}^{--}$, where ${\mathbb{M}}^{--}$ denotes
the set of planes in ${\mathbb{C}}^4$ on which
$\langle\underbar{\enskip},\underbar{\enskip}\rangle$ is negative definite. As
a product of two Stein manifolds it is Stein. For $(\Pi_1,\Pi_2)\in\cM_D$, the
corresponding cycle is
$$\big\{\Pi\in{\mathrm{Gr}}_2({\mathbb{C}}^4)\mid
\Pi={\mathrm{span}}\{\alpha,\beta\}\mbox{ for some }\alpha\in\Pi_1\mbox{ and }
\beta\in\Pi_2\big\}.$$
\section{The transform}
Consider a general double fibration (\ref{gen_doublefibration}), satisfying the
conditions~(\ref{conditions}), and suppose we are given a holomorphic vector
bundle $E$ on~$D$ and a cohomology class $\omega\in H^r(D;\cO(E))$. We shall
continue to refer to the compact complex submanifolds $\mu(\nu^{-1}(x))$ for
$x\in\cM_D$ as {\em cycles\/} in~$D$ and now consider the restriction of 
$\omega$ to these cycles: 
$$\omega|_{\mu(\nu^{-1}(x))}\in 
H^r\big(\mu(\nu^{-1}(x));\cO(E|_{\mu(\nu^{-1}(x))})\big),\enskip
\mbox{as $x\in\cM_D$ varies}.$$ 
As $\nu$ has compact fibers, these cohomology spaces are finite-dimensional and
we shall suppose that their dimension is constant as $x\in\cM_D$ varies
(generically this is the case and in the homogeneous setting, as discussed
above, this manifest if one starts with $E$ a $G$-homogeneous vector bundle).
Then, as $x\in\cM_D$ varies we obtain a vector bundle $E^\prime$ on $\cM_D$ and
a holomorphic section $\cP\omega\in\Gamma(\cM_D,\cO(E^\prime))$ thereof. This
is the {\em double fibration transform\/} of~$\omega$. It is often most
interesting starting with cohomology in the same degree as the dimension of the
fibers of~$\nu$. Two natural questions associated with this transform are
\begin{itemize}
\item is it injective?
\item what is its range?
\end{itemize}
There are clear parallels with the Radon transform and other transforms from
real integral geometry, especially when integrating over compact cycles.

The complex version, however, benefits from the following general result.
\begin{theorem}\label{from_BE}
For any double fibration\/ {\rm(\ref{gen_doublefibration})}, 
and holomorphic vector bundle $E$ on~$D$, there is a spectral sequence
\begin{equation}\label{ss}
E_1^{p,q}=\Gamma(\cM_D;\nu_*^q\Omega_\mu^p(E))\Longrightarrow 
H^{p+q}(D;\cO(E)),\end{equation}
where 
\begin{itemize}
\item $\Omega_\mu^1\equiv\Omega_{\gX_D}^1/\mu^*\Omega_D^1$, 
the holomorphic $1$-forms along the fibers of $\mu$; 
\item $\Omega_\mu^p\equiv\Lambda^p\Omega_\mu^1$,
the holomorphic $p$-forms along the fibers of $\mu$;
\item $\Omega_\mu^p(E)\equiv\Omega_\mu^p\otimes\mu^*E$.
\end{itemize}
\end{theorem}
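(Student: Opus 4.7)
The plan is to realize the spectral sequence as the column-filtered hypercohomology spectral sequence of the relative holomorphic de Rham complex along $\mu$, and then to collapse the Leray spectral sequence for $\nu$ in order to identify the $E_1$-page.

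First I would establish the relative holomorphic Poincar\'e lemma: writing $\mu$ in local coordinates $(x,y)\mapsto x$ on a polydisc in $\gX_D$ and taking the exterior derivative in the $y$-directions, one checks that
$$0\to\mu^{-1}\cO(E)\to\Omega_\mu^0(E)\xrightarrow{d_\mu}\Omega_\mu^1(E)
\xrightarrow{d_\mu}\Omega_\mu^2(E)\to\cdots$$
is exact as a sequence of sheaves on $\gX_D$. This requires only that $\mu$ be a holomorphic submersion. Next I would use the contractibility of the $\mu$-fibres: fibrewise cohomology with the locally constant coefficients $\mu^{-1}\cO(E)$ then vanishes in positive degrees, so $R^0\mu_*\mu^{-1}\cO(E)=\cO(E)$ and $R^q\mu_*\mu^{-1}\cO(E)=0$ for $q>0$. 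Consequently
$$\mathbb{H}^*(\gX_D;\Omega_\mu^\bullet(E))=H^*(\gX_D;\mu^{-1}\cO(E))=H^*(D;\cO(E)),$$
which is the required abutment.

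Now I would write down the ``stupid filtration'' hypercohomology spectral sequence of $\Omega_\mu^\bullet(E)$, whose $E_1$-page is $H^q(\gX_D;\Omega_\mu^p(E))$ and whose abutment has just been identified with $H^{p+q}(D;\cO(E))$. To put the $E_1$-page in the stated form I would push forward $\Omega_\mu^p(E)$ along $\nu$. Because $\nu$ is proper with compact fibres, Grauert's coherence theorem makes each $\nu_*^q\Omega_\mu^p(E)$ a coherent analytic sheaf on $\cM_D$; because $\cM_D$ is Stein, Cartan's Theorem~B kills the higher cohomology of these coherent sheaves, so the Leray spectral sequence of $\nu$ collapses to
$$H^q(\gX_D;\Omega_\mu^p(E))=\Gamma(\cM_D;\nu_*^q\Omega_\mu^p(E)),$$
which is exactly the desired $E_1^{p,q}$.

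The local Poincar\'e lemma and the hypercohomology machinery are standard; the real content lies in coordinating the geometric hypotheses in (\ref{conditions}), each of which is used exactly once. Contractibility of the $\mu$-fibres identifies the abutment, compactness of the $\nu$-fibres supplies coherence of the direct images, and the Stein property of $\cM_D$ kills their higher cohomology. The main obstacle is thus simply ensuring that these three ingredients interlock cleanly; if any one of them were weakened, extra terms would survive in one of the two Leray collapses and the spectral sequence (\ref{ss}) would fail to take the stated shape.
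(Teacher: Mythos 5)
Your proposal is correct and follows essentially the same two-stage route as the paper's own proof (which defers details to Baston--Eastwood): contractibility of the $\mu$-fibres identifies $H^*(D;\cO(E))$ with $H^*(\gX_D;\mu^{-1}\cO(E))$, the relative de~Rham resolution $0\to\mu^{-1}\cO(E)\to\Omega_\mu^\bullet(E)$ yields the hypercohomology spectral sequence with $E_1^{p,q}=H^q(\gX_D;\Omega_\mu^p(E))$, and the Stein property of $\cM_D$ (with properness of $\nu$) converts these groups into $\Gamma(\cM_D;\nu_*^q\Omega_\mu^p(E))$. The only difference is that you spell out the ingredients (relative Poincar\'e lemma, Grauert coherence, Cartan~B) that the paper leaves implicit.
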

\begin{proof} There are two stages to the proof, the details of which may be
found in~\cite{BE}. The first uses that the fibers of $\mu$ are contractible
to conclude that
$$H^r(D;\cO(E))\cong H^r(\gX_D;\mu^{-1}\cO(E))$$
where $\mu^{-1}\cO(E)$ denotes the sheaf of germs of holomorphic sections of 
$\mu^*E$ on $\gX_D$ that are locally constant along the fibers of~$\mu$. The 
second stage uses the resolution $0\to\mu^{-1}\cO(E)\to\Omega_\mu^\bullet(E)$ 
to construct a spectral sequence
$$E_1^{p,q}=H^q(\gX_D;\Omega_\mu^p(E))\Longrightarrow 
H^{p+q}(\gX_D;\mu^{-1}\cO(E)),$$
which combines with the natural isomorphisms
$H^q(\gX_D,\cO(F))\cong\Gamma(\cM_D,\nu_*^q\cO(F))$, valid for any holomorphic
vector bundle $F$ on $\gX_D$ because $\cM_D$ is Stein. \end{proof} 
For the rest of this article we shall suppose that the direct images
$\nu_*^q\Omega_\mu^p(E)$ are locally free and therefore may be regarded as
holomorphic vector bundles on~$\cM_D$. {From} this viewpoint, the
$E_1$-differentials become first order differential operators on $\cM_D$ and,
more generally, the spectral sequence ideally interprets the cohomology
$H^r(D;\cO(E))$ in terms of systems of holomorphic differential equations
on~$\cM_D$. This is especially interesting when $D$ is a flag domain, $\cM_D$ 
is its cycle space, and $E$ is $G$-homogeneous because then this double 
fibration transform can provide useful alternative realizations of the 
$G_0$-representations afforded by $H^r(D;\cO(E))$. 

\section{Examples}
Let us now return to the flag domains introduced in \S\ref{double_fibrations}
and see how the spectral sequence (\ref{ss}) works out for the first of these
domains, namely $D={\mathbb{CP}}_3^+$. The main issue in executing (\ref{ss})
is in computing the direct images $\nu_*^q\Omega_\mu^p(E)$. We need a notation
for the irreducible homogeneous vector bundles on the flag manifold~$Z$. For
this we shall follow~\cite{BE}, recording both the parabolic subgroup $Q$ and
the representation of $Q$ by annotating the appropriate Dynkin diagram (it
turns out to be most convenient to record the lowest weight of the
representation). For our first domain, in which $Z={\mathbb{CP}}_3$, the
irreducible homogeneous vector bundles are
$$\begin{picture}(42,5)
\put(20,1.5){\makebox(0,0){$\bullet$}}
\put(36,1.5){\makebox(0,0){$\bullet$}}
\put(4,1.5){\makebox(0,0){$\times$}}
\put(4,1.5){\line(1,0){32}}
\put(4,6){\makebox(0,0)[b]{$\scriptstyle a$}}
\put(20,6){\makebox(0,0)[b]{$\scriptstyle b$}}
\put(36,6){\makebox(0,0)[b]{$\scriptstyle c$}}
\end{picture}\quad\mbox{for integers }a,b,c\mbox{ with }b,c\geq 0.$$
The details are in \cite{BE} but some particular cases are
$$\begin{array}{l}
\begin{picture}(42,5)
\put(20,1.5){\makebox(0,0){$\bullet$}}
\put(36,1.5){\makebox(0,0){$\bullet$}}
\put(4,1.5){\makebox(0,0){$\times$}}
\put(4,1.5){\line(1,0){32}}
\put(4,6){\makebox(0,0)[b]{$\scriptstyle 0$}}
\put(20,6){\makebox(0,0)[b]{$\scriptstyle 0$}}
\put(36,6){\makebox(0,0)[b]{$\scriptstyle 0$}}
\end{picture}=\mbox{the trivial bundle}\equiv\cO\\
\begin{picture}(42,12)
\put(20,1.5){\makebox(0,0){$\bullet$}}
\put(36,1.5){\makebox(0,0){$\bullet$}}
\put(4,1.5){\makebox(0,0){$\times$}}
\put(4,1.5){\line(1,0){32}}
\put(4,6){\makebox(0,0)[b]{$\scriptstyle 1$}}
\put(20,6){\makebox(0,0)[b]{$\scriptstyle 0$}}
\put(36,6){\makebox(0,0)[b]{$\scriptstyle 1$}}
\end{picture}=\mbox{the holomorphic tangent bundle}\equiv\Theta\\
\begin{picture}(42,12)
\put(20,1.5){\makebox(0,0){$\bullet$}}
\put(36,1.5){\makebox(0,0){$\bullet$}}
\put(4,1.5){\makebox(0,0){$\times$}}
\put(4,1.5){\line(1,0){32}}
\put(4,5){\makebox(0,0)[b]{$\scriptstyle -2$}}
\put(20,6){\makebox(0,0)[b]{$\scriptstyle 1$}}
\put(36,6){\makebox(0,0)[b]{$\scriptstyle 0$}}
\end{picture}=\mbox{the holomorphic cotangent bundle}\equiv\Omega^1\\
\begin{picture}(42,12)
\put(20,1.5){\makebox(0,0){$\bullet$}}
\put(36,1.5){\makebox(0,0){$\bullet$}}
\put(4,1.5){\makebox(0,0){$\times$}}
\put(4,1.5){\line(1,0){32}}
\put(4,5){\makebox(0,0)[b]{$\scriptstyle -3$}}
\put(20,6){\makebox(0,0)[b]{$\scriptstyle 0$}}
\put(36,6){\makebox(0,0)[b]{$\scriptstyle 1$}}
\end{picture}=\mbox{the bundle of holomorphic $2$-forms}\equiv\Omega^2\\
\begin{picture}(42,12)
\put(20,1.5){\makebox(0,0){$\bullet$}}
\put(36,1.5){\makebox(0,0){$\bullet$}}
\put(4,1.5){\makebox(0,0){$\times$}}
\put(4,1.5){\line(1,0){32}}
\put(4,6){\makebox(0,0)[b]{$\scriptstyle 1$}}
\put(20,6){\makebox(0,0)[b]{$\scriptstyle 0$}}
\put(36,6){\makebox(0,0)[b]{$\scriptstyle 0$}}
\end{picture}=\mbox{the tautological bundle}\equiv\cO(1)\\
\begin{picture}(42,12)
\put(20,1.5){\makebox(0,0){$\bullet$}}
\put(36,1.5){\makebox(0,0){$\bullet$}}
\put(4,1.5){\makebox(0,0){$\times$}}
\put(4,1.5){\line(1,0){32}}
\put(4,6){\makebox(0,0)[b]{$\scriptstyle k$}}
\put(20,6){\makebox(0,0)[b]{$\scriptstyle 0$}}
\put(36,6){\makebox(0,0)[b]{$\scriptstyle 0$}}
\end{picture}=
\mbox{the $k^{\mathrm{th}}$ power of the tautological bundle}\equiv\cO(k).
\end{array}$$
Similarly, the irreducible homogeneous vector bundles on $\gX_Z$, the flag 
manifold
$${\mathrm{F}}_{1,2}({\mathbb{C}}^4)=
\{(L,\Pi)\in{\mathbb{CP}}_3\times{\mathrm{Gr}}_2({\mathbb{C}}^4)\mid 
L\subset\Pi\}$$
are given by
$$\begin{picture}(42,5)
\put(20,1.5){\makebox(0,0){$\times$}}
\put(36,1.5){\makebox(0,0){$\bullet$}}
\put(4,1.5){\makebox(0,0){$\times$}}
\put(4,1.5){\line(1,0){32}}
\put(4,6){\makebox(0,0)[b]{$\scriptstyle a$}}
\put(20,6){\makebox(0,0)[b]{$\scriptstyle b$}}
\put(36,6){\makebox(0,0)[b]{$\scriptstyle c$}}
\end{picture}\quad\mbox{for integers }a,b,c\mbox{ with }c\geq 0.$$

For computational purposes, it is always better to consider the diagram
\begin{equation}\label{bigger}
\raisebox{-20pt}{\begin{picture}(100,40)
\put(5,5){\makebox(0,0){$G/Q=Z$}}
\put(82,35){\makebox(0,0){$\gX_Z=G/(Q\cap J)$}}
\put(95,5){\makebox(0,0){$\cM_Z=G/J$}}
\put(43,28){\vector(-1,-1){16}}
\put(57,28){\vector(1,-1){16}}
\put(33,23){\makebox(0,0){$\mu$}}
\put(68,23){\makebox(0,0){$\nu$}}
\end{picture}}\end{equation}
from (\ref{holistic}), where we have extended the definition of $\mu$ and $\nu$
to $\gX_Z$ as shown. The point is that, with this enhanced definition of $\nu$,
we have $\gX_D=\nu^{-1}(\cM_D)$ and so the fibers over $\cM_D$ are unchanged.
In particular, the direct images~$\nu_*^q\Omega_\mu^p(E)$, as required in the
spectral sequence~(\ref{ss}), can be computed from (\ref{bigger}) and then
simply restricted to the open Stein subset $\cM_D\subset\cM_Z$. The advantage
of (\ref{bigger}) is that all three spaces are $G$-homogeneous and the two
mappings are $G$-equivariant. Hence, we may use representation theory to
compute $\nu_*^q\Omega_\mu^p(E)$ et cetera.

With this enhanced viewpoint in place, the bundles of holomorphic forms along 
the fibers of $\mu$ are
\begin{equation}\label{relative_forms}\Omega_\mu^0=\begin{picture}(42,5)
\put(20,1.5){\makebox(0,0){$\times$}}
\put(36,1.5){\makebox(0,0){$\bullet$}}
\put(4,1.5){\makebox(0,0){$\times$}}
\put(4,1.5){\line(1,0){32}}
\put(4,6){\makebox(0,0)[b]{$\scriptstyle 0$}}
\put(20,6){\makebox(0,0)[b]{$\scriptstyle 0$}}
\put(36,6){\makebox(0,0)[b]{$\scriptstyle 0$}}
\end{picture},\qquad
\Omega_\mu^1=\begin{picture}(42,5)
\put(20,1.5){\makebox(0,0){$\times$}}
\put(36,1.5){\makebox(0,0){$\bullet$}}
\put(4,1.5){\makebox(0,0){$\times$}}
\put(4,1.5){\line(1,0){32}}
\put(4,6){\makebox(0,0)[b]{$\scriptstyle 1$}}
\put(20,5){\makebox(0,0)[b]{$\scriptstyle -2$}}
\put(36,6){\makebox(0,0)[b]{$\scriptstyle 1$}}
\end{picture},\qquad
\Omega_\mu^2=\begin{picture}(42,5)
\put(20,1.5){\makebox(0,0){$\times$}}
\put(36,1.5){\makebox(0,0){$\bullet$}}
\put(4,1.5){\makebox(0,0){$\times$}}
\put(4,1.5){\line(1,0){32}}
\put(4,6){\makebox(0,0)[b]{$\scriptstyle 2$}}
\put(20,5){\makebox(0,0)[b]{$\scriptstyle -3$}}
\put(36,6){\makebox(0,0)[b]{$\scriptstyle 0$}}
\end{picture}.\end{equation}
Now let us consider the double fibration transform for $H^r(D;\cO(k))$. Line 
bundles are straightforward because
$$\mu^*\cO(k)=\mu^*\begin{picture}(42,12)
\put(20,1.5){\makebox(0,0){$\bullet$}}
\put(36,1.5){\makebox(0,0){$\bullet$}}
\put(4,1.5){\makebox(0,0){$\times$}}
\put(4,1.5){\line(1,0){32}}
\put(4,6){\makebox(0,0)[b]{$\scriptstyle k$}}
\put(20,6){\makebox(0,0)[b]{$\scriptstyle 0$}}
\put(36,6){\makebox(0,0)[b]{$\scriptstyle 0$}}
\end{picture}=
\begin{picture}(42,12)
\put(20,1.5){\makebox(0,0){$\times$}}
\put(36,1.5){\makebox(0,0){$\bullet$}}
\put(4,1.5){\makebox(0,0){$\times$}}
\put(4,1.5){\line(1,0){32}}
\put(4,6){\makebox(0,0)[b]{$\scriptstyle k$}}
\put(20,6){\makebox(0,0)[b]{$\scriptstyle 0$}}
\put(36,6){\makebox(0,0)[b]{$\scriptstyle 0$}}
\end{picture},$$
which is irreducible.
Writing $\Omega_\mu^p(k)$ for $\Omega_\mu^p\otimes\mu^*\cO(k)$, we have
\begin{equation}\label{relativedeRham}\Omega_\mu^0(k)=\begin{picture}(42,12)
\put(20,1.5){\makebox(0,0){$\times$}}
\put(36,1.5){\makebox(0,0){$\bullet$}}
\put(4,1.5){\makebox(0,0){$\times$}}
\put(4,1.5){\line(1,0){32}}
\put(4,6){\makebox(0,0)[b]{$\scriptstyle k$}}
\put(20,6){\makebox(0,0)[b]{$\scriptstyle 0$}}
\put(36,6){\makebox(0,0)[b]{$\scriptstyle 0$}}
\end{picture}\qquad
\Omega_\mu^1(k)=\begin{picture}(42,12)
\put(20,1.5){\makebox(0,0){$\times$}}
\put(36,1.5){\makebox(0,0){$\bullet$}}
\put(4,1.5){\makebox(0,0){$\times$}}
\put(4,1.5){\line(1,0){32}}
\put(4,5){\makebox(0,0)[b]{$\scriptstyle k+1$}}
\put(20,5){\makebox(0,0)[b]{$\scriptstyle -2$}}
\put(36,6){\makebox(0,0)[b]{$\scriptstyle 1$}}
\end{picture}\qquad
\Omega_\mu^2(k)=\begin{picture}(42,12)
\put(20,1.5){\makebox(0,0){$\times$}}
\put(36,1.5){\makebox(0,0){$\bullet$}}
\put(4,1.5){\makebox(0,0){$\times$}}
\put(4,1.5){\line(1,0){32}}
\put(4,5){\makebox(0,0)[b]{$\scriptstyle k+2$}}
\put(20,5){\makebox(0,0)[b]{$\scriptstyle -3$}}
\put(36,6){\makebox(0,0)[b]{$\scriptstyle 0$}}
\end{picture}.\end{equation}
The direct images are computed in accordance with the Bott-Borel-Weil Theorem 
along the fibers of~$\nu$, which reads
\begin{equation}\label{simple_BBW}\begin{array}{l}
\nu_*\begin{picture}(42,12)
\put(20,1.5){\makebox(0,0){$\times$}}
\put(36,1.5){\makebox(0,0){$\bullet$}}
\put(4,1.5){\makebox(0,0){$\times$}}
\put(4,1.5){\line(1,0){32}}
\put(4,6){\makebox(0,0)[b]{$\scriptstyle a$}}
\put(20,6){\makebox(0,0)[b]{$\scriptstyle b$}}
\put(36,6){\makebox(0,0)[b]{$\scriptstyle c$}}
\end{picture}=
\begin{picture}(42,12)
\put(20,1.5){\makebox(0,0){$\times$}}
\put(36,1.5){\makebox(0,0){$\bullet$}}
\put(4,1.5){\makebox(0,0){$\bullet$}}
\put(4,1.5){\line(1,0){32}}
\put(4,6){\makebox(0,0)[b]{$\scriptstyle a$}}
\put(20,6){\makebox(0,0)[b]{$\scriptstyle b$}}
\put(36,6){\makebox(0,0)[b]{$\scriptstyle c$}}
\end{picture}\quad\mbox{for }a\geq 0\\
\nu_*^1\begin{picture}(42,12)
\put(20,1.5){\makebox(0,0){$\times$}}
\put(36,1.5){\makebox(0,0){$\bullet$}}
\put(4,1.5){\makebox(0,0){$\times$}}
\put(4,1.5){\line(1,0){32}}
\put(4,6){\makebox(0,0)[b]{$\scriptstyle a$}}
\put(20,6){\makebox(0,0)[b]{$\scriptstyle b$}}
\put(36,6){\makebox(0,0)[b]{$\scriptstyle c$}}
\end{picture}=
\begin{picture}(60,12)
\put(34,1.5){\makebox(0,0){$\times$}}
\put(54,1.5){\makebox(0,0){$\bullet$}}
\put(4,1.5){\makebox(0,0){$\bullet$}}
\put(4,1.5){\line(1,0){50}}
\put(4,6){\makebox(0,0)[b]{$\scriptstyle -a-2$}}
\put(34,6){\makebox(0,0)[b]{$\scriptstyle a+b+1$}}
\put(54,6){\makebox(0,0)[b]{$\scriptstyle c$}}
\end{picture}\quad\mbox{for }a\leq -2,
\end{array}\end{equation}
with all other direct images vanishing (see e.g.~\cite{BE} for details). In 
particular, there are the spectral sequences (\ref{ss}) of the following form.
$$\begin{picture}(300,60)(7,-3)
\put(60,45){\framebox{if $k\geq 0$}}
\put(250,45){\framebox{if $k\leq-4$}}
\put(22,0){\line(1,0){12}}
\put(77,0){\line(1,0){12}}
\put(132,0){\vector(1,0){15}}
\put(147,-5){$\scriptstyle p$}
\put(0,11){\line(0,1){8}}
\put(0,31){\vector(0,1){12}}
\put(-7,42){$\scriptstyle q$}
\put(-2,22){$0$}
\put(53,22){$0$}
\put(108,22){$0$}
\put(-19,-2){\begin{picture}(42,12)
\put(20,1.5){\makebox(0,0){$\times$}}
\put(36,1.5){\makebox(0,0){$\bullet$}}
\put(4,1.5){\makebox(0,0){$\bullet$}}
\put(4,1.5){\line(1,0){32}}
\put(4,6){\makebox(0,0)[b]{$\scriptstyle k$}}
\put(20,6){\makebox(0,0)[b]{$\scriptstyle 0$}}
\put(36,6){\makebox(0,0)[b]{$\scriptstyle 0$}}
\end{picture}}
\put(36,-2){\begin{picture}(42,12)
\put(20,1.5){\makebox(0,0){$\times$}}
\put(36,1.5){\makebox(0,0){$\bullet$}}
\put(4,1.5){\makebox(0,0){$\bullet$}}
\put(4,1.5){\line(1,0){32}}
\put(4,5){\makebox(0,0)[b]{$\scriptstyle k+1$}}
\put(20,5){\makebox(0,0)[b]{$\scriptstyle -2$}}
\put(36,6){\makebox(0,0)[b]{$\scriptstyle 1$}}
\end{picture}}
\put(91,-2){\begin{picture}(42,12)
\put(20,1.5){\makebox(0,0){$\times$}}
\put(36,1.5){\makebox(0,0){$\bullet$}}
\put(4,1.5){\makebox(0,0){$\bullet$}}
\put(4,1.5){\line(1,0){32}}
\put(4,5){\makebox(0,0)[b]{$\scriptstyle k+2$}}
\put(20,5){\makebox(0,0)[b]{$\scriptstyle -3$}}
\put(36,6){\makebox(0,0)[b]{$\scriptstyle 0$}}
\end{picture}}
\put(185,0){\line(1,0){57}}
\put(250,0){\line(1,0){57}}
\put(315,0){\vector(1,0){15}}
\put(330,-5){$\scriptstyle p$}
\put(180,7){\line(0,1){10}}
\put(180,33){\vector(0,1){10}}
\put(173,42){$\scriptstyle q$}
\put(178,-2){$0$}
\put(243,-2){$0$}
\put(308,-2){$0$}
\put(161,20){\begin{picture}(42,12)
\put(20,1.5){\makebox(0,0){$\times$}}
\put(36,1.5){\makebox(0,0){$\bullet$}}
\put(-2,1.5){\makebox(0,0){$\bullet$}}
\put(-2,1.5){\line(1,0){38}}
\put(-6,5){\makebox(0,0)[b]{$\scriptstyle -k-2$}}
\put(20,5){\makebox(0,0)[b]{$\scriptstyle k+1$}}
\put(36,6){\makebox(0,0)[b]{$\scriptstyle 0$}}
\end{picture}}
\put(226,20){\begin{picture}(42,12)
\put(20,1.5){\makebox(0,0){$\times$}}
\put(36,1.5){\makebox(0,0){$\bullet$}}
\put(1,1.5){\makebox(0,0){$\bullet$}}
\put(1,1.5){\line(1,0){35}}
\put(-3,5){\makebox(0,0)[b]{$\scriptstyle -k-3$}}
\put(20,5){\makebox(0,0)[b]{$\scriptstyle k$}}
\put(36,6){\makebox(0,0)[b]{$\scriptstyle 1$}}
\end{picture}}
\put(291,20){\begin{picture}(42,12)
\put(20,1.5){\makebox(0,0){$\times$}}
\put(36,1.5){\makebox(0,0){$\bullet$}}
\put(1,1.5){\makebox(0,0){$\bullet$}}
\put(1,1.5){\line(1,0){35}}
\put(-3,5){\makebox(0,0)[b]{$\scriptstyle -k-4$}}
\put(20,5){\makebox(0,0)[b]{$\scriptstyle k$}}
\put(36,6){\makebox(0,0)[b]{$\scriptstyle 0$}}
\end{picture}}
\end{picture}$$
Let us say that the spectral sequence $E_1^{p,q}$ is 
{\em concentrated in degree zero\/} if and only if $E_1^{p,q}=0$ for $q>0$ 
and {\em strictly concentrated in degree zero\/} if and only if, in addition, 
$E_1^{p,0}\not=0,\;\forall p$. Similarly, let us say we have 
{\em concentration in top degree\/} if and only if $E_1^{p,q}=0$ for $q<s$, 
where $s=\dim_{\mathbb{C}}(\mbox{fibres of }\nu)$ and {\em strictly\/} so if 
and only if $E_1^{p,s}\not=0,\;\forall p$. Thus, strict concentration occurs 
in this example for $k\geq 0$ or $k\leq -4$. In fact, it is easily verified 
that
\begin{equation}\label{sampleconcentration}\begin{array}{rcl}
k\geq 0&\implies&\mbox{strict concentration in degree zero},\\
k=-1&\implies&\mbox{concentration in degree zero},\\
k=-2&\implies&\mbox{no concentration},\\
k=-3&\implies&\mbox{concentration in top degree }(s=1),\\
k\leq -4&\implies&\mbox{strict concentration in top degree}.
\end{array}\end{equation}
The double fibration transform in this case is known as the {\em Penrose 
transform\/}~\cite{EPW}. Always, the spectral sequence is most easily 
interpreted when it concentrates in top degree for then it collapses to yield, 
in particular, an isomorphism
$${\mathcal{P}}:H^s(D;\cO(E))\xrightarrow{\!\simeq\enskip}\ker:
\Gamma(\cM_D;\nu_*^s\Omega_\mu^0(E))\to\Gamma(\cM_D;\nu_*^s\Omega_\mu^1(E)).$$
In our example
$${\mathcal{P}}:H^s(D;\cO(k))\xrightarrow{\!\simeq\enskip}\ker:
\Gamma(\cM_D;\quad\enskip\begin{picture}(42,12)
\put(20,1.5){\makebox(0,0){$\times$}}
\put(36,1.5){\makebox(0,0){$\bullet$}}
\put(-2,1.5){\makebox(0,0){$\bullet$}}
\put(-2,1.5){\line(1,0){38}}
\put(-6,5){\makebox(0,0)[b]{$\scriptstyle -k-2$}}
\put(20,5){\makebox(0,0)[b]{$\scriptstyle k+1$}}
\put(36,6){\makebox(0,0)[b]{$\scriptstyle 0$}}
\end{picture})\to\Gamma(\cM_D;\quad\begin{picture}(42,12)
\put(20,1.5){\makebox(0,0){$\times$}}
\put(36,1.5){\makebox(0,0){$\bullet$}}
\put(1,1.5){\makebox(0,0){$\bullet$}}
\put(1,1.5){\line(1,0){35}}
\put(-3,5){\makebox(0,0)[b]{$\scriptstyle -k-3$}}
\put(20,5){\makebox(0,0)[b]{$\scriptstyle k$}}
\put(36,6){\makebox(0,0)[b]{$\scriptstyle 1$}}
\end{picture}),$$
for $k\leq -3$ and the right hand side has an interpretation in physics as
so-called {\em massless fields of helicity\/} $-1-k/2$
(see e.g.~\cite{EPW} for details).

The main aim of this article is to show that concentration in zero versus top
degree are related by a duality. This will turn out to be useful because the
spectral sequence has simple consequences when concentrated in top degree 
whereas criteria for concentration in degree zero are more easily determined.

\section{The duality}
\begin{theorem}\label{maintheorem}
Let $\kappa_D$ and $\kappa_{\cM_D}$ denote the canonical bundles on $D$ and
$\cM_D$, respectively. Let $d=\dim_{\mathbb{C}}(\mbox{fibres of }\mu)$ and
recall that $s=\dim_{\mathbb{C}}(\mbox{fibres of }\nu)$. Then there are
canonical isomorphisms 
\begin{equation}\label{duality}\nu_*^q\Omega_\mu^p(\kappa_D\otimes E^*)=
\kappa_{\cM_D}\otimes(\nu_*^{s-q}\Omega_\mu^{d-p}(E))^*,\enskip\forall\,
0\leq p\leq d,\,0\leq q\leq s.\end{equation}
The spectral sequence\/ {\rm(\ref{ss})} for the vector bundle $E$ is (strictly)
concentrated in top degree if and only if the corresponding spectral sequence
for $\kappa_D\otimes E^*$ is (strictly) concentrated in degree zero.
\end{theorem}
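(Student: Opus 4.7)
The plan is to reduce Theorem~\ref{maintheorem} to relative Serre duality along the compact fibers of~$\nu$, combined with two bookkeeping identities for the canonical bundles. The second assertion will then follow from the first by pure combinatorics.

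First I would invoke Grothendieck--Serre duality for the proper holomorphic submersion $\nu:\gX_D\to\cM_D$ of relative dimension~$s$: under the standing hypothesis that the relevant direct images are locally free, there is a canonical isomorphism
$$\nu_*^q F\;\cong\;\bigl(\nu_*^{s-q}(F^*\otimes\kappa_\nu)\bigr)^*$$
of vector bundles on~$\cM_D$, for any holomorphic vector bundle $F$ on $\gX_D$ with locally free direct images; here $\kappa_\nu\equiv\det\Omega_\nu^1$ is the relative canonical bundle. Fiberwise this is classical Serre duality on each compact fiber; the local-freeness hypothesis is exactly what lets one upgrade the fiberwise isomorphism to a canonical isomorphism of bundles over $\cM_D$.

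Next I would apply this with $F=\Omega_\mu^p(\kappa_D\otimes E^*)$ and simplify $F^*\otimes\kappa_\nu$ using two standard identities. The fiber-wedge pairing $\Omega_\mu^p\otimes\Omega_\mu^{d-p}\to\kappa_\mu$ is perfect and gives $(\Omega_\mu^p)^*\cong\Omega_\mu^{d-p}\otimes\kappa_\mu^{-1}$, while taking top exterior powers in the two short exact sequences
$$0\to\mu^*\Omega_D^1\to\Omega_{\gX_D}^1\to\Omega_\mu^1\to 0,\qquad
0\to\nu^*\Omega_{\cM_D}^1\to\Omega_{\gX_D}^1\to\Omega_\nu^1\to 0$$
yields $\kappa_{\gX_D}\cong\kappa_\mu\otimes\mu^*\kappa_D\cong\kappa_\nu\otimes\nu^*\kappa_{\cM_D}$, and hence the key identity
$$\kappa_\nu\otimes\kappa_\mu^{-1}\;=\;\mu^*\kappa_D\otimes\nu^*\kappa_{\cM_D}^{-1}.$$
Substituting these into $F^*\otimes\kappa_\nu$ causes all factors of $\kappa_\mu$ and $\mu^*\kappa_D$ to cancel, leaving $\Omega_\mu^{d-p}(E)\otimes\nu^*\kappa_{\cM_D}^{-1}$. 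The projection formula then pulls $\kappa_{\cM_D}^{-1}$ outside $\nu_*^{s-q}$, and dualizing gives precisely the right-hand side of~(\ref{duality}).

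For the second assertion, (\ref{duality}) (applied with $E$ replaced by $\kappa_D\otimes E^*$) shows that $\nu_*^q\Omega_\mu^p(E)=0$ if and only if $\nu_*^{s-q}\Omega_\mu^{d-p}(\kappa_D\otimes E^*)=0$, since $\kappa_{\cM_D}$ is a line bundle and vector-bundle dualization reflects (non)vanishing. The involution $(p,q)\leftrightarrow(d-p,s-q)$ on $[0,d]\times[0,s]$ interchanges the column $\{q=s,\;0\le p\le d\}$ with the row $\{q'=0,\;0\le p'\le d\}$ and the half-strip $\{q<s\}$ with $\{q'>0\}$, which is exactly the asserted exchange of (strict) top-degree and degree-zero concentration. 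The main obstacle is not a computation but a consistency check: one must be certain that relative Serre duality is invoked in the right form and that the $\kappa_{\cM_D}$ twist ends up on the correct side, and this is governed precisely by the identity $\kappa_\nu\otimes\kappa_\mu^{-1}=\mu^*\kappa_D\otimes\nu^*\kappa_{\cM_D}^{-1}$ together with the projection formula, after which the argument is essentially mechanical.
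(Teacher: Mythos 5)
Your argument is correct and is essentially the paper's own proof: both rest on Serre duality along the compact fibres of $\nu$, the pairing $(\Omega_\mu^p)^*\cong\Omega_\mu^{d-p}\otimes\kappa_\mu^{-1}$, and the two factorizations $\kappa_{\gX_D}=\mu^*\kappa_D\otimes\kappa_\mu=\nu^*\kappa_{\cM_D}\otimes\kappa_\nu$, differing only in whether one simplifies before or after invoking the duality. The one point you gloss over in the final step is that concentration is defined in terms of the $E_1$-terms $\Gamma(\cM_D;\nu_*^q\Omega_\mu^p(E))$ rather than the bundles themselves, so you still need the (easy) observation that, $\cM_D$ being contractible and Stein, a direct image bundle is non-zero if and only if its space of global sections is non-zero.
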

\begin{proof} Certainly, the last statement follows immediately
from~(\ref{duality}): as $\cM_D$ is contractible and Stein, if 
$\nu_*^q\Omega_\mu^p(E)$ is non-zero then neither is 
$\Gamma(\cM_D;\nu_*^q\Omega_\mu^p(E))$. 

Notice that (\ref{duality}) generalizes Serre duality~\cite{S}. Specifically,
if $D$ is an arbitrary compact manifold, then we may take $\gX_D=D$ and $\cM_D$
to be a point. Then $d=0$, direct images revert to cohomology, and
(\ref{duality}) becomes
$$H^q(D;\cO(\kappa_D\otimes E^*))=H^{s-q}(D;\cO(E))^*.$$
Conversely, Serre duality along the fibers of $\nu$ is the essential 
ingredient in proving (\ref{duality}) as follows. Let $\kappa_{\gX_D}$ denote 
the canonical bundle on~$\gX_D$. Since $\mu$ and $\nu$ are submersions, we can 
write $\kappa_{\gX_D}$ in two different ways:
\begin{equation}\label{two_different_ways}
\kappa_{\gX_D}=\mu^*(\kappa_D)\otimes\kappa_\mu\quad\mbox{and}\quad
\kappa_{\gX_D}=\nu^*(\kappa_{\cM_D})\otimes\kappa_\nu,\end{equation}
where $\kappa_\mu$ and $\kappa_\nu$ are the canonical bundles along the fibers
of $\mu$ and~$\nu$, respectively. Thus, bearing in mind the Hodge isomorphism 
$\Omega_\mu^p=\kappa_\mu\otimes(\Omega_\mu^{d-p})^*$
along the fibers of~$\mu$, we find that
$$\begin{array}{rcl}\nu_*^q\Omega_\mu^p(\kappa_D\otimes E^*)
&=&\nu_*^q\big(\mu^*(\kappa_D)\otimes\Omega_\mu^p\otimes\mu^*(E^*)\big)\\
&=&\nu_*^q\big(\kappa_{\gX_D}\otimes\kappa_\mu^*\otimes
\Omega_\mu^p\otimes\mu^*(E^*)\big)\\
&=&\nu_*^q\big(\nu^*(\kappa_{\cM_D})\otimes\kappa_\nu
\otimes(\kappa_\mu^*\otimes\Omega_\mu^p)\otimes\mu^*(E^*)\big)\\
&=&\kappa_{\cM_D}\otimes\nu_*^q\big(\kappa_\nu\otimes(\Omega_\mu^{d-p})^*
\otimes\mu^*(E^*)\big)\\
&=&\kappa_{\cM_D}\otimes\nu_*^q\big(\kappa_\nu\otimes
(\Omega_\mu^{d-p}\otimes\mu^*(E))^*\big),
\end{array}$$
which may be identified by Serre duality along the fibers of $\nu$ to give
$$\nu_*^q\Omega_\mu^p(\kappa_D\otimes E^*)
=\kappa_{\cM_D}\otimes\big(\nu_*^{s-q}(\Omega_\mu^{d-p}\otimes\mu^*(E))\big)^*
=\kappa_{\cM_D}\otimes(\nu_*^{s-q}\Omega_\mu^{d-p}(E))^*,$$
as required.
\end{proof}    
\section{Applications}\label{applications}
Let us firstly show how Theorem~\ref{maintheorem} yields
(\ref{sampleconcentration}) with minimal calculation. It is clear from
(\ref{relativedeRham}) that strict concentration in degree zero occurs if
$k\geq 0$. Indeed, since 
$\begin{picture}(42,12)
\put(20,1.5){\makebox(0,0){$\times$}}
\put(36,1.5){\makebox(0,0){$\bullet$}}
\put(4,1.5){\makebox(0,0){$\times$}}
\put(4,1.5){\line(1,0){32}}
\put(4,5){\makebox(0,0)[b]{$\scriptstyle -1$}}
\put(20,6){\makebox(0,0)[b]{$\scriptstyle 0$}}
\put(36,6){\makebox(0,0)[b]{$\scriptstyle 0$}}
\end{picture}$
is singular along the fibers of $\nu$ it is also clear that concentration in 
degree zero also occurs when $k=-1$. But now
$$\kappa_D\otimes\big(\begin{picture}(42,12)
\put(20,1.5){\makebox(0,0){$\bullet$}}
\put(36,1.5){\makebox(0,0){$\bullet$}}
\put(4,1.5){\makebox(0,0){$\times$}}
\put(4,1.5){\line(1,0){32}}
\put(4,6){\makebox(0,0)[b]{$\scriptstyle k$}}
\put(20,6){\makebox(0,0)[b]{$\scriptstyle 0$}}
\put(36,6){\makebox(0,0)[b]{$\scriptstyle 0$}}
\end{picture}\big)^*=\begin{picture}(42,12)
\put(20,1.5){\makebox(0,0){$\bullet$}}
\put(36,1.5){\makebox(0,0){$\bullet$}}
\put(4,1.5){\makebox(0,0){$\times$}}
\put(4,1.5){\line(1,0){32}}
\put(4,5){\makebox(0,0)[b]{$\scriptstyle -4$}}
\put(20,6){\makebox(0,0)[b]{$\scriptstyle 0$}}
\put(36,6){\makebox(0,0)[b]{$\scriptstyle 0$}}
\end{picture}\otimes\begin{picture}(42,12)
\put(20,1.5){\makebox(0,0){$\times$}}
\put(36,1.5){\makebox(0,0){$\bullet$}}
\put(4,1.5){\makebox(0,0){$\times$}}
\put(4,1.5){\line(1,0){32}}
\put(4,5){\makebox(0,0)[b]{$\scriptstyle -k$}}
\put(20,6){\makebox(0,0)[b]{$\scriptstyle 0$}}
\put(36,6){\makebox(0,0)[b]{$\scriptstyle 0$}}
\end{picture}=\quad\begin{picture}(42,12)
\put(20,1.5){\makebox(0,0){$\bullet$}}
\put(36,1.5){\makebox(0,0){$\bullet$}}
\put(1,1.5){\makebox(0,0){$\times$}}
\put(1,1.5){\line(1,0){35}}
\put(-3,5){\makebox(0,0)[b]{$\scriptstyle -k-4$}}
\put(20,6){\makebox(0,0)[b]{$\scriptstyle 0$}}
\put(36,6){\makebox(0,0)[b]{$\scriptstyle 0$}}
\end{picture}$$
and Theorem~\ref{maintheorem} tells us that we have strict concentration in top 
degree if and only if $-k-4\geq 0$, which gives $k\leq -4$ as expected. 
Similarly, $-k-4=-1$ if and only if $k=-3$. 

To extend this analysis to vector bundles there are two issue to be overcome. 
The first is that the pullback $\mu^*(\begin{picture}(42,12)
\put(20,1.5){\makebox(0,0){$\bullet$}}
\put(36,1.5){\makebox(0,0){$\bullet$}}
\put(4,1.5){\makebox(0,0){$\times$}}
\put(4,1.5){\line(1,0){32}}
\put(4,6){\makebox(0,0)[b]{$\scriptstyle a$}}
\put(20,6){\makebox(0,0)[b]{$\scriptstyle b$}}
\put(36,6){\makebox(0,0)[b]{$\scriptstyle c$}}
\end{picture})$ is reducible in general. Specifically, 
\begin{equation}\label{pullback}
\raisebox{-20pt}{\makebox[0pt]{$\mu^*(\begin{picture}(42,12)
\put(20,1.5){\makebox(0,0){$\bullet$}}
\put(36,1.5){\makebox(0,0){$\bullet$}}
\put(4,1.5){\makebox(0,0){$\times$}}
\put(4,1.5){\line(1,0){32}}
\put(4,6){\makebox(0,0)[b]{$\scriptstyle a$}}
\put(20,6){\makebox(0,0)[b]{$\scriptstyle b$}}
\put(36,6){\makebox(0,0)[b]{$\scriptstyle c$}}
\end{picture})=\begin{picture}(42,12)
\put(20,1.5){\makebox(0,0){$\times$}}
\put(36,1.5){\makebox(0,0){$\bullet$}}
\put(4,1.5){\makebox(0,0){$\times$}}
\put(4,1.5){\line(1,0){32}}
\put(4,6){\makebox(0,0)[b]{$\scriptstyle a$}}
\put(20,6){\makebox(0,0)[b]{$\scriptstyle b$}}
\put(36,6){\makebox(0,0)[b]{$\scriptstyle c$}}
\end{picture}+
\begin{array}{c}
\begin{picture}(42,12)
\put(20,1.5){\makebox(0,0){$\times$}}
\put(40,1.5){\makebox(0,0){$\bullet$}}
\put(0,1.5){\makebox(0,0){$\times$}}
\put(0,1.5){\line(1,0){40}}
\put(0,6){\makebox(0,0)[b]{$\scriptstyle a+1$}}
\put(20,6){\makebox(0,0)[b]{$\scriptstyle b-2$}}
\put(40,6){\makebox(0,0)[b]{$\scriptstyle c+1$}}
\end{picture}\\
\oplus\\
\begin{picture}(42,12)
\put(20,1.5){\makebox(0,0){$\times$}}
\put(40,1.5){\makebox(0,0){$\bullet$}}
\put(0,1.5){\makebox(0,0){$\times$}}
\put(0,1.5){\line(1,0){40}}
\put(0,6){\makebox(0,0)[b]{$\scriptstyle a+1$}}
\put(20,6){\makebox(0,0)[b]{$\scriptstyle b-1$}}
\put(40,6){\makebox(0,0)[b]{$\scriptstyle c-1$}}
\end{picture}
\end{array}+\enskip
\begin{array}{c}
\begin{picture}(42,12)
\put(20,1.5){\makebox(0,0){$\times$}}
\put(40,1.5){\makebox(0,0){$\bullet$}}
\put(0,1.5){\makebox(0,0){$\times$}}
\put(0,1.5){\line(1,0){40}}
\put(0,6){\makebox(0,0)[b]{$\scriptstyle a+2$}}
\put(20,6){\makebox(0,0)[b]{$\scriptstyle b-4$}}
\put(40,6){\makebox(0,0)[b]{$\scriptstyle c+2$}}
\end{picture}\\
\oplus\\
\begin{picture}(42,12)
\put(20,1.5){\makebox(0,0){$\times$}}
\put(40,1.5){\makebox(0,0){$\bullet$}}
\put(0,1.5){\makebox(0,0){$\times$}}
\put(0,1.5){\line(1,0){40}}
\put(0,6){\makebox(0,0)[b]{$\scriptstyle a+2$}}
\put(20,6){\makebox(0,0)[b]{$\scriptstyle b-3$}}
\put(40,6){\makebox(0,0)[b]{$\scriptstyle c$}}
\end{picture}\\
\oplus\\
\begin{picture}(42,12)
\put(20,1.5){\makebox(0,0){$\times$}}
\put(40,1.5){\makebox(0,0){$\bullet$}}
\put(0,1.5){\makebox(0,0){$\times$}}
\put(0,1.5){\line(1,0){40}}
\put(0,6){\makebox(0,0)[b]{$\scriptstyle a+2$}}
\put(20,6){\makebox(0,0)[b]{$\scriptstyle b-2$}}
\put(40,6){\makebox(0,0)[b]{$\scriptstyle c-2$}}
\end{picture}
\end{array}+
\!\begin{array}{c}\ddots\\ \cdots\\[2pt] \revddots\end{array}\!
+\qquad
\begin{picture}(42,12)
\put(20,1.5){\makebox(0,0){$\times$}}
\put(40,1.5){\makebox(0,0){$\bullet$}}
\put(-7,1.5){\makebox(0,0){$\times$}}
\put(-7,1.5){\line(1,0){47}}
\put(-7,6){\makebox(0,0)[b]{$\scriptstyle a+b+c$}}
\put(20,6){\makebox(0,0)[b]{$\scriptstyle -b-c$}}
\put(40,6){\makebox(0,0)[b]{$\scriptstyle b$}}
\end{picture}\,.$}}\end{equation}
The second is that, even for an irreducible bundle 
$V=\begin{picture}(42,12)
\put(20,1.5){\makebox(0,0){$\times$}}
\put(36,1.5){\makebox(0,0){$\bullet$}}
\put(4,1.5){\makebox(0,0){$\times$}}
\put(4,1.5){\line(1,0){32}}
\put(4,6){\makebox(0,0)[b]{$\scriptstyle a$}}
\put(20,6){\makebox(0,0)[b]{$\scriptstyle b$}}
\put(36,6){\makebox(0,0)[b]{$\scriptstyle c$}}
\end{picture}$ on ${\mathrm{F}}_{1,2}({\mathbb{C}}^4)$, the bundle
$\Omega_\mu^1\otimes V$ may be reducible. For example, these two issues in
combination imply that 
$$\Omega_\mu^1(\begin{picture}(42,12)
\put(20,1.5){\makebox(0,0){$\bullet$}}
\put(36,1.5){\makebox(0,0){$\bullet$}}
\put(4,1.5){\makebox(0,0){$\times$}}
\put(4,1.5){\line(1,0){32}}
\put(4,6){\makebox(0,0)[b]{$\scriptstyle 1$}}
\put(20,6){\makebox(0,0)[b]{$\scriptstyle 0$}}
\put(36,6){\makebox(0,0)[b]{$\scriptstyle 1$}}
\end{picture})=\begin{picture}(42,12)
\put(20,1.5){\makebox(0,0){$\times$}}
\put(36,1.5){\makebox(0,0){$\bullet$}}
\put(4,1.5){\makebox(0,0){$\times$}}
\put(4,1.5){\line(1,0){32}}
\put(4,6){\makebox(0,0)[b]{$\scriptstyle 1$}}
\put(20,5){\makebox(0,0)[b]{$\scriptstyle -2$}}
\put(36,6){\makebox(0,0)[b]{$\scriptstyle 1$}}
\end{picture}\otimes\big(\begin{picture}(42,12)
\put(20,1.5){\makebox(0,0){$\times$}}
\put(36,1.5){\makebox(0,0){$\bullet$}}
\put(4,1.5){\makebox(0,0){$\times$}}
\put(4,1.5){\line(1,0){32}}
\put(4,6){\makebox(0,0)[b]{$\scriptstyle 1$}}
\put(20,6){\makebox(0,0)[b]{$\scriptstyle 0$}}
\put(36,6){\makebox(0,0)[b]{$\scriptstyle 1$}}
\end{picture}+
\begin{picture}(42,12)
\put(20,1.5){\makebox(0,0){$\times$}}
\put(36,1.5){\makebox(0,0){$\bullet$}}
\put(4,1.5){\makebox(0,0){$\times$}}
\put(4,1.5){\line(1,0){32}}
\put(4,6){\makebox(0,0)[b]{$\scriptstyle 2$}}
\put(20,5){\makebox(0,0)[b]{$\scriptstyle -1$}}
\put(36,6){\makebox(0,0)[b]{$\scriptstyle 0$}}
\end{picture}\big)=
\begin{array}{c}\begin{picture}(42,12)
\put(20,1.5){\makebox(0,0){$\times$}}
\put(36,1.5){\makebox(0,0){$\bullet$}}
\put(4,1.5){\makebox(0,0){$\times$}}
\put(4,1.5){\line(1,0){32}}
\put(4,6){\makebox(0,0)[b]{$\scriptstyle 2$}}
\put(20,5){\makebox(0,0)[b]{$\scriptstyle -2$}}
\put(36,6){\makebox(0,0)[b]{$\scriptstyle 2$}}
\end{picture}\\ \oplus\\
\begin{picture}(42,12)
\put(20,1.5){\makebox(0,0){$\times$}}
\put(36,1.5){\makebox(0,0){$\bullet$}}
\put(4,1.5){\makebox(0,0){$\times$}}
\put(4,1.5){\line(1,0){32}}
\put(4,6){\makebox(0,0)[b]{$\scriptstyle 2$}}
\put(20,5){\makebox(0,0)[b]{$\scriptstyle -1$}}
\put(36,6){\makebox(0,0)[b]{$\scriptstyle 0$}}
\end{picture}\end{array}\!\!+\begin{picture}(42,12)
\put(20,1.5){\makebox(0,0){$\times$}}
\put(36,1.5){\makebox(0,0){$\bullet$}}
\put(4,1.5){\makebox(0,0){$\times$}}
\put(4,1.5){\line(1,0){32}}
\put(4,6){\makebox(0,0)[b]{$\scriptstyle 3$}}
\put(20,5){\makebox(0,0)[b]{$\scriptstyle -3$}}
\put(36,6){\makebox(0,0)[b]{$\scriptstyle 1$}}
\end{picture}$$
and the spectral sequence 
$\nu_*^q\Omega_\mu^p(\begin{picture}(42,12)
\put(20,1.5){\makebox(0,0){$\bullet$}}
\put(36,1.5){\makebox(0,0){$\bullet$}}
\put(4,1.5){\makebox(0,0){$\times$}}
\put(4,1.5){\line(1,0){32}}
\put(4,6){\makebox(0,0)[b]{$\scriptstyle 1$}}
\put(20,6){\makebox(0,0)[b]{$\scriptstyle 0$}}
\put(36,6){\makebox(0,0)[b]{$\scriptstyle 1$}}
\end{picture})$ takes the form 
\begin{equation}\label{Theta_sp_seq}
\raisebox{-45pt}{\begin{picture}(300,82)(0,3)
\put(20,20){\makebox(0,0){$\begin{array}{c}\begin{picture}(42,12)
\put(20,1.5){\makebox(0,0){$\times$}}
\put(36,1.5){\makebox(0,0){$\bullet$}}
\put(4,1.5){\makebox(0,0){$\bullet$}}
\put(4,1.5){\line(1,0){32}}
\put(4,6){\makebox(0,0)[b]{$\scriptstyle 1$}}
\put(20,6){\makebox(0,0)[b]{$\scriptstyle 0$}}
\put(36,6){\makebox(0,0)[b]{$\scriptstyle 1$}}
\end{picture}\\ +\\
\begin{picture}(42,12)
\put(20,1.5){\makebox(0,0){$\times$}}
\put(36,1.5){\makebox(0,0){$\bullet$}}
\put(4,1.5){\makebox(0,0){$\bullet$}}
\put(4,1.5){\line(1,0){32}}
\put(4,6){\makebox(0,0)[b]{$\scriptstyle 2$}}
\put(20,5){\makebox(0,0)[b]{$\scriptstyle -1$}}
\put(36,6){\makebox(0,0)[b]{$\scriptstyle 0$}}
\end{picture}\end{array}$}}
\put(140,20){\makebox(0,0){$\begin{array}{c}\begin{picture}(42,12)
\put(20,1.5){\makebox(0,0){$\times$}}
\put(36,1.5){\makebox(0,0){$\bullet$}}
\put(4,1.5){\makebox(0,0){$\bullet$}}
\put(4,1.5){\line(1,0){32}}
\put(4,6){\makebox(0,0)[b]{$\scriptstyle 2$}}
\put(20,5){\makebox(0,0)[b]{$\scriptstyle -2$}}
\put(36,6){\makebox(0,0)[b]{$\scriptstyle 2$}}
\end{picture}\oplus\begin{picture}(42,12)
\put(20,1.5){\makebox(0,0){$\times$}}
\put(36,1.5){\makebox(0,0){$\bullet$}}
\put(4,1.5){\makebox(0,0){$\bullet$}}
\put(4,1.5){\line(1,0){32}}
\put(4,6){\makebox(0,0)[b]{$\scriptstyle 2$}}
\put(20,5){\makebox(0,0)[b]{$\scriptstyle -1$}}
\put(36,6){\makebox(0,0)[b]{$\scriptstyle 0$}}
\end{picture}\\ +\\
\begin{picture}(42,12)
\put(20,1.5){\makebox(0,0){$\times$}}
\put(36,1.5){\makebox(0,0){$\bullet$}}
\put(4,1.5){\makebox(0,0){$\bullet$}}
\put(4,1.5){\line(1,0){32}}
\put(4,6){\makebox(0,0)[b]{$\scriptstyle 3$}}
\put(20,5){\makebox(0,0)[b]{$\scriptstyle -3$}}
\put(36,6){\makebox(0,0)[b]{$\scriptstyle 0$}}
\end{picture}\end{array}$}}
\put(260,20){\makebox(0,0){$\begin{array}{c}\begin{picture}(42,12)
\put(20,1.5){\makebox(0,0){$\times$}}
\put(36,1.5){\makebox(0,0){$\bullet$}}
\put(4,1.5){\makebox(0,0){$\bullet$}}
\put(4,1.5){\line(1,0){32}}
\put(4,6){\makebox(0,0)[b]{$\scriptstyle 3$}}
\put(20,5){\makebox(0,0)[b]{$\scriptstyle -3$}}
\put(36,6){\makebox(0,0)[b]{$\scriptstyle 0$}}
\end{picture}\\ +\\
\begin{picture}(42,12)
\put(20,1.5){\makebox(0,0){$\times$}}
\put(36,1.5){\makebox(0,0){$\bullet$}}
\put(4,1.5){\makebox(0,0){$\bullet$}}
\put(4,1.5){\line(1,0){32}}
\put(4,6){\makebox(0,0)[b]{$\scriptstyle 4$}}
\put(20,5){\makebox(0,0)[b]{$\scriptstyle -4$}}
\put(36,6){\makebox(0,0)[b]{$\scriptstyle 0$}}
\end{picture}\end{array}$}}
\put(45,19){\line(1,0){45}}
\put(235,19){\line(-1,0){45}}
\put(285,19){\vector(1,0){20}}
\put(305,13){$p$}
\put(20,45){\line(0,1){12}}
\put(20,65){\makebox(0,0){$0$}}
\put(140,65){\makebox(0,0){$0$}}
\put(260,65){\makebox(0,0){$0$}}
\put(20,73){\vector(0,1){10}}
\put(12,82){$q$}
\end{picture}}\end{equation}
In particular, it is concentrated in degree zero. This is a general feature as 
follows.
\begin{theorem}\label{zero_concentration}
The spectral sequence for 
$H^r(D;{\mathcal{O}}(\begin{picture}(42,12)
\put(20,1.5){\makebox(0,0){$\bullet$}}
\put(36,1.5){\makebox(0,0){$\bullet$}}
\put(4,1.5){\makebox(0,0){$\times$}}
\put(4,1.5){\line(1,0){32}}
\put(4,6){\makebox(0,0)[b]{$\scriptstyle a$}}
\put(20,6){\makebox(0,0)[b]{$\scriptstyle b$}}
\put(36,6){\makebox(0,0)[b]{$\scriptstyle c$}}
\end{picture}))$ 
associated to the double fibration
\begin{equation}\label{twistors}
\raisebox{-20pt}{\begin{picture}(117,40)
\put(5,5){\makebox(0,0){${\mathbb{CP}}_3^+=D$}}
\put(50,35){\makebox(0,0){$\gX_D$}}
\put(97,5){\makebox(0,0){$\cM_D={\mathbb{M}}^{++}$}}
\put(43,28){\vector(-1,-1){16}}
\put(57,28){\vector(1,-1){16}}
\put(33,23){\makebox(0,0){$\mu$}}
\put(68,23){\makebox(0,0){$\nu$}}
\end{picture}}{}^{\mathrm{open}}\subset
\raisebox{-20pt}{\begin{picture}(100,40)(-13,0)
\put(7,5){\makebox(0,0){${\mathbb{CP}}_3=Z$}}
\put(75,35){\makebox(0,0){$\gX_Z={\mathrm{F}}_{1,2}({\mathbb{C}}^4)$}}
\put(103,5){\makebox(0,0){$\cM_Z={\mathrm{Gr}}_2({\mathbb{C}}^4)$}}
\put(43,28){\vector(-1,-1){16}}
\put(57,28){\vector(1,-1){16}}
\put(33,23){\makebox(0,0){$\mu$}}
\put(68,23){\makebox(0,0){$\nu$}}
\end{picture}}\end{equation}
is strictly concentrated in degree zero if $a\geq 0$.
\end{theorem}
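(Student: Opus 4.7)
The plan is to verify the Bott--Borel--Weil criterion (\ref{simple_BBW}) for every irreducible constituent of $\Omega_\mu^p(E)$ on the compact flag manifold $\gX_Z$, then restrict to $\cM_D$ as justified in the paragraph following (\ref{bigger}). Along the $\mathbb{CP}_1$-fibres of $\nu$, BBW is controlled entirely by the first (leftmost) label $a'$ of an irreducible on $\gX_Z$: $\nu_*^0$ is nonzero when $a'\geq 0$, $\nu_*^1$ is nonzero when $a'\leq -2$, and both vanish when $a'=-1$. It therefore suffices to show that every composition factor of $\Omega_\mu^p(E)$ for $p=0,1,2$ carries a first label that is~$\geq 0$.

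I would proceed in three steps. First, inspecting the pullback formula (\ref{pullback}) shows that every irreducible constituent of $\mu^*E$ has first label in the range $\{a,a+1,\ldots,a+b+c\}$, all $\geq a$. Second, by (\ref{relative_forms}) the bundle $\Omega_\mu^p$ is itself $\gX_Z$-irreducible with first label exactly $p\in\{0,1,2\}$. Third, on the flag manifold $\gX_Z$ the first label encodes a character of the central $GL_1$-factor of the Levi of $Q\cap J$, and such characters add under tensor product; consequently every composition factor of $\Omega_\mu^p(E)$ has first label $\geq a+p\geq 0$ whenever $a\geq 0$.

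To convert this from the irreducible constituents back to $\Omega_\mu^p(E)$, I would use the filtration induced by (\ref{pullback}): for any short exact sequence $0\to A\to B\to C\to 0$ on $\gX_Z$ with $\nu_*^q A=\nu_*^q C=0$ for $q\geq 1$, the long exact sequence gives $\nu_*^q B=0$ for $q\geq 1$, and induction on the filtration length yields $\nu_*^q\Omega_\mu^p(E)=0$ for $q\geq 1$. Exactness of $\nu_*^0$ on the filtration, combined with BBW producing a nonzero $\nu_*^0$ on at least one subquotient (for instance the one from $\Omega_\mu^p$ tensored with the leading piece with labels $(a,b,c)$ in (\ref{pullback})), then gives $\nu_*^0\Omega_\mu^p(E)\neq 0$. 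Since $\cM_D$ is Stein, Cartan's Theorem~A ensures $\Gamma(\cM_D;\nu_*^0\Omega_\mu^p(E))\neq 0$, delivering strict concentration in degree zero.

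The main subtlety I expect is the tensor-product bookkeeping on $\gX_Z$: one must confirm additivity of the first label and verify that no composition factor of $\Omega_\mu^p(E)$ slips to a first label of $-1$ or less. The hypothesis $a\geq 0$ provides a safety margin of at least $p$, so this is a matter of careful tracking rather than a genuine obstacle.
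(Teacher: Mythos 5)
Your proposal is correct and follows essentially the same route as the paper's proof: both arguments track the coefficient over the first (crossed) node through every composition factor of $\Omega_\mu^p(E)$ --- using the pullback filtration (\ref{pullback}) and the tensoring with $\Omega_\mu^p$ --- and then invoke the Bott--Borel--Weil computation (\ref{simple_BBW}) along the ${\mathbb{CP}}_1$-fibres of $\nu$ to conclude that all higher direct images vanish when $a\geq 0$. Your explicit appeal to additivity of the first label as a central character of the Levi of $Q\cap J$, and your spelling out of the long-exact-sequence and strictness steps, are sound elaborations of what the paper leaves implicit (the paper instead reads off the same conclusion from the explicit decompositions (\ref{more_generally})).
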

\begin{proof}
Firstly, notice that all the composition factors occurring in (\ref{pullback})
are dominant with respect to the first node if $a\geq 0$. Although clear by
inspection, the underlying reason for this is that the composition factors are 
obtained from the leading term 
$\begin{picture}(42,12)
\put(20,1.5){\makebox(0,0){$\times$}}
\put(36,1.5){\makebox(0,0){$\bullet$}}
\put(4,1.5){\makebox(0,0){$\times$}}
\put(4,1.5){\line(1,0){32}}
\put(4,6){\makebox(0,0)[b]{$\scriptstyle a$}}
\put(20,6){\makebox(0,0)[b]{$\scriptstyle b$}}
\put(36,6){\makebox(0,0)[b]{$\scriptstyle c$}}
\end{picture}$ by adding simple negative roots for 
$\begin{picture}(42,12)
\put(20,1.5){\makebox(0,0){$\bullet$}}
\put(36,1.5){\makebox(0,0){$\bullet$}}
\put(4,1.5){\makebox(0,0){$\times$}}
\put(4,1.5){\line(1,0){32}}
\end{picture}$, namely 
\begin{equation}\label{simplenegative}\begin{picture}(42,12)
\put(20,1.5){\makebox(0,0){$\bullet$}}
\put(36,1.5){\makebox(0,0){$\bullet$}}
\put(4,1.5){\makebox(0,0){$\times$}}
\put(4,1.5){\line(1,0){32}}
\put(4,6){\makebox(0,0)[b]{$\scriptstyle 1$}}
\put(20,5){\makebox(0,0)[b]{$\scriptstyle -2$}}
\put(36,6){\makebox(0,0)[b]{$\scriptstyle 1$}}
\end{picture}\quad\mbox{and}\quad
\begin{picture}(42,12)
\put(20,1.5){\makebox(0,0){$\bullet$}}
\put(36,1.5){\makebox(0,0){$\bullet$}}
\put(4,1.5){\makebox(0,0){$\times$}}
\put(4,1.5){\line(1,0){32}}
\put(4,6){\makebox(0,0)[b]{$\scriptstyle 0$}}
\put(20,6){\makebox(0,0)[b]{$\scriptstyle 1$}}
\put(36,5){\makebox(0,0)[b]{$\scriptstyle -2$}}
\end{picture}\end{equation}
(minus the second two rows 
of the Cartan matrix for ${\mathfrak{sl}}(4,{\mathbb{C}})$) both of which have
a non-negative coefficient over the first node. Secondly, there is the question
of tensoring these composition factors with $\Omega_\mu^p$ from
(\ref{relative_forms}). Each of $\Omega_\mu^p$ is dominant with respect to the 
first node and, more generally,
\begin{equation}\label{more_generally}
)$ 
and determine, with respect to the first and last nodes, whether they are
dominant or singular (i.e.~whether the integer over that node is non-negative 
or~$-1$, respectively) since,
according to (\ref{direct_images}), such an inspection will determine whether
we have (strict) concentration in degree zero. As the composition factors are 
all line-bundles, this is straightforward arithmetic. If $b\geq 1$, then it is 
easy to check that the leading terms are dominant and the rest are 
mostly dominant but occasionally singular. In this case, the conclusion of the 
theorem is clear. When $b=0$ there are just two exceptions, both of which occur
within $\Omega_\mu^2(
$ on $Z={\mathbb{CP}}_3$, firstly we have the following BGG
counterpart to Theorem~\ref{from_BE}.
\begin{theorem}\label{example_of_BGG_ss}
For the double fibration\/ {\rm(\ref{twistors})}, 
and any irreducible homogeneous vector bundle $E$ on~${\mathbb{CP}}_3^+$, 
there is a spectral sequence
\begin{equation}\label{BGG_ss}
E_1^{p,q}=\Gamma({\mathbb{M}}^{++};\nu_*^q\Delta_\mu^p(E))\Longrightarrow 
H^{p+q}({\mathbb{CP}}_3^+;\cO(E)).\end{equation}
\end{theorem}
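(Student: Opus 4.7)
The plan is to parallel the proof of Theorem~\ref{from_BE} verbatim, substituting the BGG complex $\Delta_\mu^\bullet(E)$ along the fibres of~$\mu$ for the coupled holomorphic de~Rham complex $\Omega_\mu^\bullet(E)$ used there. The preceding discussion already asserts (from the general projective-space BGG construction of~\cite{EG}) that $\Delta_\mu^\bullet(E)$ is a resolution of $\mu^{-1}\cO(E)$, and once this is granted the two-stage argument of Theorem~\ref{from_BE} transports without modification.

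First, I would invoke the first stage of that argument unchanged: because the fibres of $\mu:\gX_D\to{\mathbb{CP}}_3^+$ are contractible, one has the canonical isomorphism
$$H^r({\mathbb{CP}}_3^+;\cO(E))\cong H^r(\gX_D;\mu^{-1}\cO(E)).$$
Next, in place of the de~Rham resolution I substitute the BGG resolution $0\to\mu^{-1}\cO(E)\to\Delta_\mu^\bullet(E)$, whose standard hypercohomology spectral sequence reads
$$E_1^{p,q}=H^q(\gX_D;\Delta_\mu^p(E))\Longrightarrow H^{p+q}(\gX_D;\mu^{-1}\cO(E)).$$

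Finally, I would appeal to the fact that $\cM_D={\mathbb{M}}^{++}$ is Stein, so that for any holomorphic vector bundle $F$ on~$\gX_D$ one has the natural identification
$$H^q(\gX_D;\cO(F))\cong\Gamma({\mathbb{M}}^{++};\nu_*^q\cO(F)).$$
Applying this with $F=\Delta_\mu^p(E)$ and combining with the first-stage isomorphism rewrites the spectral sequence above as the claimed
$$E_1^{p,q}=\Gamma({\mathbb{M}}^{++};\nu_*^q\Delta_\mu^p(E))\Longrightarrow H^{p+q}({\mathbb{CP}}_3^+;\cO(E)).$$

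The main obstacle, and the only non-formal ingredient, is the input statement that $\Delta_\mu^\bullet(E)$ actually resolves $\mu^{-1}\cO(E)$: one must verify that the cancellations producing $\Delta_\mu^p(E)$ from the composition factors listed in~(\ref{pullback}) and~(\ref{projective_BGG_along_mu}) are compatible with the relative differentials and deliver an exact sequence fibre by fibre. This is precisely the content of the BGG construction of~\cite{EG} and, once invoked, reduces the proof to pure formalism mirroring Theorem~\ref{from_BE}.
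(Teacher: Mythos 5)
Your proposal is correct and follows essentially the same route as the paper, whose proof of Theorem~\ref{example_of_BGG_ss} is simply ``a simple variation on the proof of Theorem~\ref{from_BE}: details are in~\cite{BE}'' --- precisely the substitution of the BGG resolution $\Delta_\mu^\bullet(E)$ for $\Omega_\mu^\bullet(E)$ that you carry out, with the exactness of the BGG complex along the fibres of $\mu$ as the one non-formal input. You correctly identify that this input comes from the construction in~\cite{EG} (and~\cite{BE}), so nothing further is needed.
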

\begin{proof}
A simple variation on the proof of Theorem~\ref{from_BE}: 
details are in~\cite{BE}.
\end{proof}
Secondly, we have the following BGG counterpart to
Theorem~\ref{zero_concentration}.
\begin{theorem}\label{BGG_zero_concentration}
The spectral sequence\/ {\rm(\ref{BGG_ss})} controlling
$H^r({\mathbb{CP}}_3^+;{\mathcal{O}}(\begin{picture}(42,12)
\put(20,1.5){\makebox(0,0){$\bullet$}}
\put(36,1.5){\makebox(0,0){$\bullet$}}
\put(4,1.5){\makebox(0,0){$\times$}}
\put(4,1.5){\line(1,0){32}}
\put(4,6){\makebox(0,0)[b]{$\scriptstyle a$}}
\put(20,6){\makebox(0,0)[b]{$\scriptstyle b$}}
\put(36,6){\makebox(0,0)[b]{$\scriptstyle c$}}
\end{picture}))$ 
is strictly concentrated in degree zero if $a\geq 0$.
\end{theorem}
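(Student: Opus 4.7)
The plan is to mimic the proof of Theorem~\ref{zero_concentration}, but with the much shorter BGG complex $\Delta_\mu^\bullet(E)$ of (\ref{generalBGG}) in place of the full coupled de~Rham complex $\Omega_\mu^\bullet(E)$. Applying Theorem~\ref{example_of_BGG_ss}, strict concentration in degree zero of~(\ref{BGG_ss}) will follow once I verify, for each of the three terms $p\in\{0,1,2\}$ of the BGG complex, that $\nu_*^q\Delta_\mu^p(E)=0$ for $q\ge1$ and that $\Gamma(\mathbb{M}^{++};\nu_*\Delta_\mu^p(E))\ne0$.

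First I would read off the three bundles from (\ref{generalBGG}): each is already a \emph{single} irreducible homogeneous bundle on $\gX_Z$, with the integer over the first (crossed) node being $a$, $a+b+1$ and $a+b+c+2$, respectively. The standing irreducibility requirement forces $b,c\ge0$, so the hypothesis $a\ge0$ immediately makes all three of these integers non-negative. This is where the BGG complex pays off: there is no analogue of the composition-factor bookkeeping needed for the reducible pullbacks in (\ref{pullback}) and their tensor products with $\Omega_\mu^p$ as in the proof of Theorem~\ref{zero_concentration}.

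Next I would invoke the Bott-Borel-Weil formulae~(\ref{simple_BBW}) along the fibers of~$\nu$. Since the first node of each of the three terms is dominant, all higher direct images vanish while $\nu_*$ produces a non-zero irreducible homogeneous bundle on $\cM_Z$. Restricting to the Stein open subset $\cM_D=\mathbb{M}^{++}$ and taking global sections preserves non-vanishing (holomorphic vector bundles on a contractible Stein manifold have plentiful sections), yielding the strictness clause.

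There is no real obstacle to this argument: all the hard work has already been invested in the construction of the BGG resolution $\Delta_\mu^\bullet$ and in Theorem~\ref{example_of_BGG_ss}. The content of this theorem is precisely that the BGG viewpoint reduces the concentration criterion to three elementary inequalities, bypassing the more intricate analysis needed in Theorem~\ref{zero_concentration}. The only mildly delicate point, worth spelling out in the written proof, is a clean justification that the BGG complex~(\ref{generalBGG}) is the \emph{same} complex whose direct images compute~(\ref{alternative_complex}) in the motivating example, so that the spectral sequence being analyzed genuinely is~(\ref{BGG_ss}) rather than one of its variants.
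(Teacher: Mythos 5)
Your proposal is correct and takes essentially the same route as the paper, which disposes of the theorem in one line ("this is clear from (\ref{generalBGG})") on exactly the grounds you spell out: the three BGG terms have first-node coefficients $a$, $a+b+1$, $a+b+c+2$, all non-negative when $a\geq 0$ and $b,c\geq 0$, so Bott--Borel--Weil along the fibers of $\nu$ gives non-zero zeroth direct images and vanishing higher ones. Your write-up merely makes explicit the steps the paper leaves implicit (including the strictness via sections over the Stein cycle space), which is fine.
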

\begin{proof}As already discussed, this is clear from (\ref{generalBGG}).
\end{proof}
Notice that the proof of Theorem~\ref{BGG_zero_concentration} is considerably
more straightforward than the proof of Theorem~\ref{zero_concentration} because
the BGG resolution (\ref{generalBGG}) is considerably more straightforward than
the coupled de~Rham complex obtained by combining (\ref{pullback})
with~(\ref{more_generally}). Furthermore, the BGG complex on projective space 
may be constructed without too much difficulty~\cite{EG}.  

The advantages of using the BGG complex in this setting continue with BGG
counterparts to Theorem~\ref{maintheorem} and
Corollary~\ref{top_concentration}, combined as follows.
\begin{proposition} Let $\kappa_{{\mathbb{CP}}_3}=\begin{picture}(42,12)
\put(20,1.5){\makebox(0,0){$\bullet$}}
\put(36,1.5){\makebox(0,0){$\bullet$}}
\put(4,1.5){\makebox(0,0){$\times$}}
\put(4,1.5){\line(1,0){32}}
\put(4,5){\makebox(0,0)[b]{$\scriptstyle -4$}}
\put(20,6){\makebox(0,0)[b]{$\scriptstyle 0$}}
\put(36,6){\makebox(0,0)[b]{$\scriptstyle 0$}}
\end{picture}$ and 
$\kappa_{{\mathrm{Gr}}_2({\mathbb{C}}^4)}=\begin{picture}(42,12)
\put(20,1.5){\makebox(0,0){$\times$}}
\put(36,1.5){\makebox(0,0){$\bullet$}}
\put(4,1.5){\makebox(0,0){$\bullet$}}
\put(4,1.5){\line(1,0){32}}
\put(4,6){\makebox(0,0)[b]{$\scriptstyle 0$}}
\put(20,5){\makebox(0,0)[b]{$\scriptstyle -4$}}
\put(36,6){\makebox(0,0)[b]{$\scriptstyle 0$}}
\end{picture}$ denote the canonical bundles on ${\mathbb{CP}}_3$ and 
${\mathrm{Gr}}_2({\mathbb{C}}^4)$, respectively. Then there are canonical 
isomorphisms
\begin{equation}\label{improved_duality}
\nu_*^q\Delta_\mu^p(\kappa_{{\mathbb{CP}}_3}\otimes E^*)=
\kappa_{{\mathrm{Gr}}_2({\mathbb{C}}^4)}
\otimes(\nu_*^{1-q}\Delta_\mu^{2-p}(E))^*,\enskip\forall\,
0\leq p\leq 2,\,0\leq q\leq 1.\end{equation}
The spectral sequence\/ {\rm(\ref{BGG_ss})} for the vector bundle $E$ is
(strictly) concentrated in top degree if and only if the corresponding spectral
sequence for $\kappa_{{\mathbb{CP}}_3}\otimes E^*$ is (strictly) concentrated 
in degree zero. The spectral sequence\/ {\rm(\ref{BGG_ss})} is concentrated 
in top degree (namely, first degree) if $a\leq-4-b-c$ and in this case yields
an isomorphism between 
$H^r({\mathbb{CP}}_3^+;{\mathcal{O}}(\begin{picture}(42,12)
\put(20,1.5){\makebox(0,0){$\bullet$}}
\put(36,1.5){\makebox(0,0){$\bullet$}}
\put(4,1.5){\makebox(0,0){$\times$}}
\put(4,1.5){\line(1,0){32}}
\put(4,6){\makebox(0,0)[b]{$\scriptstyle a$}}
\put(20,6){\makebox(0,0)[b]{$\scriptstyle b$}}
\put(36,6){\makebox(0,0)[b]{$\scriptstyle c$}}
\end{picture}))$ and 
\begin{equation}\label{the_range}
\ker:\Gamma({\mathbb{M}}^{++};\cO(\begin{picture}(64,12)(-10,0)
\put(30,1.5){\makebox(0,0){$\times$}}
\put(50,1.5){\makebox(0,0){$\bullet$}}
\put(4,1.5){\makebox(0,0){$\bullet$}}
\put(4,1.5){\line(1,0){46}}
\put(2,5){\makebox(0,0)[b]{$\scriptstyle -a-2$}}
\put(30,5){\makebox(0,0)[b]{$\scriptstyle a+b+1$}}
\put(50,6){\makebox(0,0)[b]{$\scriptstyle c$}}
\end{picture}))\to
\Gamma({\mathbb{M}}^{++};\cO(\begin{picture}(76,12)(-14,0)
\put(28,1.5){\makebox(0,0){$\times$}}
\put(47,1.5){\makebox(0,0){$\bullet$}}
\put(4,1.5){\makebox(0,0){$\bullet$}}
\put(4,1.5){\line(1,0){43}}
\put(2,5){\makebox(0,0)[b]{$\scriptstyle -a-b-3$}}
\put(28,6){\makebox(0,0)[b]{$\scriptstyle a$}}
\put(49,5){\makebox(0,0)[b]{$\scriptstyle b+c+1$}}
\end{picture})).\end{equation}
\end{proposition}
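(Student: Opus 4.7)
The plan is to prove the proposition's three assertions in sequence. The duality isomorphism~(\ref{improved_duality}) is the cornerstone; it is the BGG counterpart of Theorem~\ref{maintheorem}, and I would mimic that proof step by step, using both decompositions of $\kappa_{\gX_D}$ from~(\ref{two_different_ways}) and relative Serre duality along the fibers of $\nu$. The one ingredient that must be upgraded is the Hodge identity $\Omega_\mu^p\cong\kappa_\mu\otimes(\Omega_\mu^{d-p})^*$: for the BGG complex one needs a fiberwise BGG--Serre duality on the ${\mathbb{CP}}_2$-fibers of~$\mu$, identifying $\Delta_\mu^p(\kappa_D\otimes E^*)$ with the appropriate twist of $(\Delta_\mu^{d-p}(E))^*$.

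The main obstacle is establishing that fiberwise identity. Since $\Delta_\mu^p(E)$ does not factor cleanly as $\mu^*E\otimes(\cdot)$, there is no projection-formula shortcut, and the identity must be verified directly from the three explicit terms of~(\ref{generalBGG}). Concretely, I would substitute $(a,b,c)\mapsto(-4-a-b-c,c,b)$ in~(\ref{generalBGG}) to obtain the three BGG terms for $\kappa_D\otimes E^*$, compute their direct images under $\nu$ via Bott--Borel--Weil~(\ref{simple_BBW}) in each of the six cases $(p,q)\in\{0,1,2\}\times\{0,1\}$, and compare against $\kappa_{\cM_D}\otimes(\nu_*^{1-q}\Delta_\mu^{2-p}(E))^*$, bearing in mind that duality on ${\mathrm{Gr}}_2({\mathbb{C}}^4)$ sends the lowest weight $(x,y,z)$ to $(x,-x-y-z,z)$ and that $\kappa_{\cM_D}$ tensors by $(0,-4,0)$. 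As a consistency check, at $(p,q)=(0,0)$ both sides reduce to $\cO(-4-a-b-c,\,c,\,b)$ on ${\mathrm{Gr}}_2({\mathbb{C}}^4)$, and the other five cases are formally similar. A more conceptual route would derive the fiberwise identity from the symmetry of the Hasse diagram of the BGG resolution on ${\mathbb{CP}}_n$ under $w\mapsto w_0 w$ together with Serre duality on the fibers, but direct arithmetic on Dynkin labels is the most efficient method here.

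Once~(\ref{improved_duality}) is in hand the other two assertions follow essentially for free. The concentration duality is immediate: since $\cM_D={\mathbb{M}}^{++}$ is contractible and Stein, nonvanishing of $\nu_*^q\Delta_\mu^p(E)$ is equivalent to nonvanishing of its space of global sections, and~(\ref{improved_duality}) swaps the roles of $(p,q)$ and $(2-p,1-q)$, so strict concentration in one endpoint is equivalent to strict concentration in the other. For the concrete criterion, if $a\leq-4-b-c$ then the Dynkin labels of $\kappa_{{\mathbb{CP}}_3}\otimes E^*=\cO(-4-a-b-c,\,c,\,b)$ are all nonnegative, so Theorem~\ref{BGG_zero_concentration} gives strict concentration in degree zero for the spectral sequence of $\kappa_D\otimes E^*$, and the duality then yields strict concentration in top degree $s=1$ for the spectral sequence of~$E$. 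The sequence therefore collapses at~$E_2$, so $H^1({\mathbb{CP}}_3^+;\cO(E))$ equals the kernel of $d_1\colon\Gamma(\cM_D;\nu_*^1\Delta_\mu^0(E))\to\Gamma(\cM_D;\nu_*^1\Delta_\mu^1(E))$, and applying~(\ref{simple_BBW}) to the first two terms of~(\ref{generalBGG}) identifies these sections spaces with precisely the two $\Gamma$'s displayed in~(\ref{the_range}).
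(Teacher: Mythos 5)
Your proposal is correct and follows the same overall architecture as the paper's proof: a fiberwise duality for the BGG terms, the two factorizations (\ref{two_different_ways}) of $\kappa_{\gX_D}$, Serre duality along the fibers of~$\nu$, and then Theorem~\ref{BGG_zero_concentration} combined with the duality to obtain concentration in top degree, collapse at $E_2$, and the kernel description~(\ref{the_range}). The one point of divergence is how the fiberwise identity is established. The paper obtains $\Delta_\mu^p(\kappa_{{\mathbb{CP}}_3}\otimes E^*)=\mu^*(\kappa_{{\mathbb{CP}}_3})\otimes\kappa_\mu\otimes\Delta_\mu^{2-p}(E)^*$ structurally from two observations: the BGG resolution of $E^*$ is the formal adjoint of that of $E$, whence $\Delta_\mu^p(E^*)=\kappa_\mu\otimes\Delta_\mu^{2-p}(E)^*$; and $\Delta_\mu^\bullet(L\otimes E)=\mu^*L\otimes\Delta_\mu^\bullet(E)$ for any line bundle $L$ on~${\mathbb{CP}}_3$. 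So your aside that ``there is no projection-formula shortcut'' is not quite right --- for the line-bundle twist by $\kappa_{{\mathbb{CP}}_3}$ there is one, visible directly in~(\ref{generalBGG}), and it is exactly what the paper exploits. Your substitution $(a,b,c)\mapsto(-4-a-b-c,\,c,\,b)$ followed by case-by-case Bott--Borel--Weil arithmetic subsumes both observations and lands on the same bundles (your $(p,q)=(0,0)$ check is right), so nothing is lost; what the paper's route buys is brevity and an explanation of why the isomorphisms are canonical, namely formal adjointness, which your Dynkin-label verification only confirms at the level of bundles. The remaining steps match the paper exactly; the paper additionally remarks that the operator appearing in~(\ref{the_range}) is then characterized by its $G$-invariance, a point worth adding but not needed to identify the bundles themselves.
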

\begin{proof} For a linear differential operator $\delta:V\to W$ between any 
two vector bundles, there is its formal adjoint 
$\delta^*:\kappa\otimes W^*\to\kappa\otimes V^*$ and the BGG resolution 
$\Delta_\mu^\bullet(E^*)$ is the formal adjoint of 
$\Delta_\mu^\bullet(E)$. For the bundles 
themselves, this is easily verified from~(\ref{generalBGG}). Specifically,
$$\Delta_\mu^p(E^*)=
\kappa_\mu\otimes\Delta_\mu^{2-p}(E)^*,\enskip\forall\,
0\leq p\leq 2,$$
where recall that, as in the proof of Theorem~\ref{maintheorem}, we denote by
$\kappa_\mu(=\begin{picture}(42,12)
\put(20,1.5){\makebox(0,0){$\times$}}
\put(36,1.5){\makebox(0,0){$\bullet$}}
\put(4,1.5){\makebox(0,0){$\times$}}
\put(4,1.5){\line(1,0){32}}
\put(4,6){\makebox(0,0)[b]{$\scriptstyle 2$}}
\put(20,5.5){\makebox(0,0)[b]{$\scriptstyle -3$}}
\put(36,6){\makebox(0,0)[b]{$\scriptstyle 0$}}
\end{picture})$ the canonical bundle along the fibers of~$\mu$. Also notice from 
(\ref{generalBGG}) that for any line bundle
$L=\begin{picture}(42,12)
\put(20,1.5){\makebox(0,0){$\bullet$}}
\put(36,1.5){\makebox(0,0){$\bullet$}}
\put(4,1.5){\makebox(0,0){$\times$}}
\put(4,1.5){\line(1,0){32}}
\put(4,5){\makebox(0,0)[b]{$\scriptstyle k$}}
\put(20,6){\makebox(0,0)[b]{$\scriptstyle 0$}}
\put(36,6){\makebox(0,0)[b]{$\scriptstyle 0$}}
\end{picture}$ on ${\mathbb{CP}}_3$, we have 
$\Delta_\mu^\bullet(L\otimes E)=\mu^*L\otimes\Delta_\mu^\bullet(E)$.
As in the proof 
of Theorem~\ref{maintheorem} we now use (\ref{two_different_ways}) to write
$\mu^*(\kappa_{{\mathbb{CP}}_3})\otimes\kappa_\mu=
\nu^*(\kappa_{{\mathrm{Gr}}_2({\mathbb{C}}^4)})\otimes\kappa_\nu$ and deduce 
that
$$\Delta_\mu^p(\kappa_{{\mathbb{CP}}_3}\otimes E^*)=
\mu^*(\kappa_{{\mathbb{CP}}_3})\otimes\kappa_\mu\otimes\Delta_\mu^{2-p}(E)^*
=\nu^*(\kappa_{{\mathrm{Gr}}_2({\mathbb{C}}^4)})\otimes\kappa_\nu
\otimes\Delta_\mu^{2-p}(E)^*$$
and, therefore, that
$$\nu_*^q\Delta_\mu^p(\kappa_{{\mathbb{CP}}_3}\otimes E^*)=
\kappa_{{\mathrm{Gr}}_2({\mathbb{C}}^4)}\otimes\nu_*^q(\kappa_\nu
\otimes\Delta_\mu^{2-p}(E)^*).$$
Serre duality along the fibers of $\nu$ yields~(\ref{improved_duality}). The
remaining statements in this proposition are straightforward save for the
appearance of the specific differential (\ref{the_range}) operator whose kernel
identifies the range of the double fibration transform. The bundles may be
obtained by direct computation from (\ref{simple_BBW}), (\ref{generalBGG}), and
(\ref{BGG_ss}). In fact, once the bundles are identified, the differential
operator acting between them is characterized by its $G$-invariance~\cite{ER}, 
which it manifestly enjoys.
\end{proof}
Continuing the theme of formal adjoints implicit in (\ref{improved_duality}), it is interesting to note that the operator
$\nu_*^1\Delta_\mu^0(E)\to\nu_*^1\Delta_\mu^1(E)$ of (\ref{the_range}) may also
be obtained as the formal adjoint of
$\nu_*\Delta_\mu^1(\kappa_{{\mathbb{CP}}_3}\otimes E^*)\to
\nu_*\Delta_\mu^2(\kappa_{{\mathbb{CP}}_3}\otimes E^*)$. This alternative
derivation is available more generally~\cite{EW}.

We shall now discuss the generality in which the BGG resolution can be brought
to bear, as above, in order to improve our understanding of the double
fibration transform. In brief, our considerations of the 
de~Rham-based spectral sequence (\ref{ss}) went as follows.
Our viewpoint was to interpret $H^r(D;\cO(E))$ as an analytic object 
on~$\cM_D$. Roughly speaking, the steps were as follows.
\begin{itemize}
\item Establish isomorphisms
$H^r(D;\cO(E))\cong H^r(\gX_D;\mu^{-1}(\cO(E))),\enskip\forall\,r$.
\item Let $\Omega_\mu^p(E)=\Omega_\mu^p\otimes_\cO\cO(\mu^*E)$,
      the holomorphic $p$-forms along the fibers of~$\mu$, coupled with  
      the pullback $\mu^*E$ of $E$ to~$\gX_D$.
\item Combine these steps to establish Theorem~\ref{from_BE}, namely that 
      there is a spectral sequence 
      $E_1^{p,q}=
      \Gamma(\cM_D;\nu_*^q\Omega_\mu^p(E))\implies H^{p+q}(D;\cO(E))$.
\item Interpret the direct images 
      $\nu_*^q\Omega_\mu^p(E) = \cO(V^{(q)})$ in this spectral sequence as
      certain $(\gg,G_0)$-homogeneous vector bundles $V^{(q)}\to \cM_D$\,.
\item If $E\to D$ is {\em sufficiently negative} then the 
      $\nu_*^q\Omega_\mu^p(E)$ are concentrated in degree 
      $s=\dim_{\mathbb{C}}(\text{fibers of }\nu)$, and the spectral sequence 
      collapses to
\item $H^s(D;\cO(E))\cong\ker:\Gamma(\cM_D,\nu_*^s\Omega_\mu^0(E))
      \stackrel{\nabla}{\longrightarrow}
      \Gamma(\cM_D,\nu_*^s\Omega_\mu^1(E))$.
\end{itemize}
The big problem here is to do the computations that result in the required
sufficient negativity. In good cases, this is facilitated by use of
a BGG-based spectral sequence (see Theorem~\ref{example_of_BGG_ss} and \cite{BE}
more generally). The idea is that the BGG resolution introduces considerable
simplification when $\cM_D$ is a {\em bounded symmetric domain}.

So suppose $\cM_D$ is a bounded symmetric domain $G_0/K_0$ and $P$ is the parabolic
subgroup of $G$ such that $\cM_Z = G/P$ is the complex flag manifolds that
is the dual of $\cM_D$. 

The BGG resolution is constructed from the holomorphic de~Rham resolution
essentially as follows.
\begin{itemize}
\item Observe that $\mu$ has fiber $Q/(P\cap Q) = Q^{ss}/(P \cap Q^{ss})$, 
      where `$\scriptstyle ss$' indicates the semisimple part.
\item Let $W_{(\gp\cap \gq)}^\gq$ consist of the minimal length Weyl group 
      elements $w_{(\gp\cap \gq)}^\gq$ representing the right cosets 
      $W_{(\gp\cap \gq)} \backslash W_\gq$
\item Define $\Delta_\mu^r(\lambda) = 
      \sum \{\cO_{G/(P\cap Q)}(w_{(\gp\cap \gq)}^\gq.\lambda) \mid
      \text{ length } \ell(w_{(\gp\cap \gq)}^\gq) = r\}$, where $\lambda$ 
      is the extremal weight corresponding to the homogeneous bundle~$E$. 
\end{itemize}
The BGG counterpart to the de~Rham-based spectral sequence is the following
result (generalizing Theorem~\ref{example_of_BGG_ss}): If $E \to D$ is an
irreducible $G_0$-homogeneous holomorphic vector bundle then there is a
spectral sequence
\begin{equation}\label{full_blown_BGG_ss}
E_1^{p,q}=\Gamma(\cM_D;\nu_*^q\Delta_\mu^p(E))\implies
H^{p+q}(D;\cO(E)).
\end{equation}
See~\cite{BE} for this theorem and for computation of the 
homogeneous bundles $\Delta_\mu^p(E)$.
The point is that all flag domain computations can be 
carried out for the simpler (compactified) correspondence of flag manifolds. 

The case of flag domains $D$ for which $\cM_D$ is a bounded symmetric domain
$G_0/K_0$ is known as the {\em Hermitian holomorphic} case: see~\cite{FHW} for
details and the rough division of cycle spaces into various other cases. The
double fibration (\ref{twistors}) is the prototype of this sort of cycle space
and is the basic correspondence of {\em twistor theory}~\cite{HT}. This is the
case in which the BGG resolution can be used as indicated above. The details
will appear elsewhere \cite{EW} but here is the final conclusion regarding
sufficient negativity of the bundle $E\to D$ in this case.
\begin{theorem}\label{besttheorem} In the Hermitian holomorphic case the
spectral sequences\/ {\rm (\ref{ss})} and\/ {\rm(\ref{full_blown_BGG_ss})} are
concentrated in degree zero if the highest weight for $E$ is dominant for~$G$.
These spectral sequences are concentrated in top degree if the highest weight
for $\kappa_D\otimes E^*$ is dominant for~$G$.
\end{theorem}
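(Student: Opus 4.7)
The plan is to work primarily with the BGG spectral sequence~(\ref{full_blown_BGG_ss}), because in the Hermitian holomorphic case the direct images along $\nu$ reduce to Bott-Borel-Weil computations for~$P$. Recall that $\cM_Z=G/P$ with $P$ a parabolic whose Levi factor is~$K$, so the fibers of $\nu\colon G/(P\cap Q)\to G/P$ are compact flag manifolds $P/(P\cap Q)$. From the construction of the BGG resolution in~\cite{BE},
$$\Delta_\mu^p(E)=\bigoplus_{w\in W_{(\gp\cap\gq)}^\gq,\;\ell(w)=p}\cO_{\gX_Z}(w\cdot\lambda),$$
where $\lambda$ is the extremal weight of $E$, and consequently each $\nu_*^q\Delta_\mu^p(E)$ is a direct sum of BBW contributions on the fibers of~$\nu$, one for each weight $w\cdot\lambda$.

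First I would establish concentration in degree zero for~(\ref{full_blown_BGG_ss}). The key combinatorial claim is that if $\lambda$ is $G$-dominant, then every $w\cdot\lambda$ appearing above is $P$-dominant, so BBW along the fibers of $\nu$ produces only zeroth direct images, giving strict concentration. The claim should follow from the characteristic property of minimal representatives $w\in W_{(\gp\cap\gq)}^\gq$, namely that $w$ maps positive roots of $W_{\gp\cap\gq}$ to positive roots of~$\gg$, combined with the affine Weyl shift $w\cdot\lambda=w(\lambda+\rho)-\rho$ and strict $G$-dominance of $\lambda+\rho$. The Hermitian symmetric hypothesis is essential here: since $\gp$ has abelian nilradical and $K$ is the entire Levi factor of~$P$, the reflections involved are rigid enough to preserve the required dominance.

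Concentration of the coupled de Rham spectral sequence~(\ref{ss}) I would then obtain either by directly inspecting the composition factors of $\Omega_\mu^p(E)$ in the manner of Theorems~\ref{zero_concentration} and~\ref{tricky_concentration}, showing that $G$-dominance of $\lambda$ keeps each factor either $P$-dominant or $P$-singular; or alternatively by the cancellation mechanism exhibited in passing from the coupled de Rham complex to~(\ref{alternative_complex}), where one removes pairs of bundles linked by zeroth-order isomorphisms. Generalising such cancellations across all Hermitian symmetric cases (the promised content of~\cite{EW}) will transfer concentration from the BGG spectral sequence to the de Rham one.

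For concentration in top degree I would invoke the duality of Theorem~\ref{maintheorem} together with its BGG counterpart established in the preceding Proposition. The formal-adjoint identities
$$\nu_*^q\Omega_\mu^p(\kappa_D\otimes E^*)=\kappa_{\cM_D}\otimes\bigl(\nu_*^{s-q}\Omega_\mu^{d-p}(E)\bigr)^*,$$
and analogously for $\Delta_\mu^p$, interchange concentration in degree zero with concentration in top degree, so applying the first half of the theorem to $\kappa_D\otimes E^*$ yields the second half immediately. The main obstacle in this plan is the combinatorial assertion that minimal coset representatives preserve $P$-dominance of $G$-dominant weights; although transparent in the explicit examples~(\ref{twistors}) and~(\ref{cycles_for_M_plus_minus}) treated above, verifying it for arbitrary Hermitian symmetric cycle spaces requires careful control over the Hasse diagram of $W_{\gp\cap\gq}\backslash W_\gq$, which is where the bulk of the technical work in~\cite{EW} will sit.
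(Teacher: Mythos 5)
First, a point of reference: the paper does not actually prove Theorem~\ref{besttheorem} --- it defers all details to~\cite{EW} and records only the strategy in the bulleted outline of the BGG resolution preceding the statement. Measured against that outline, the BGG half of your proposal is sound. Your key combinatorial claim is correct, and your sketch of why is essentially right: for $w\in W_{(\gp\cap\gq)}^\gq$ and $\alpha$ a simple root of the Levi factor of $\gp$, either $\alpha$ lies in the Levi factor of $\gq$, in which case minimality of $w$ in its coset $W_{(\gp\cap\gq)}w$ forces $w^{-1}\alpha>0$, or $\alpha$ is crossed in $\gq$, in which case $w\in W_\gq$ forces $w^{-1}\alpha>0$ because $w^{-1}\alpha$ retains coefficient $1$ over $\alpha$; either way $\langle w(\lambda+\rho),\alpha^\vee\rangle=\langle\lambda+\rho,w^{-1}\alpha^\vee\rangle>0$, so $w\cdot\lambda$ is dominant along the fibers of $\nu$ and only zeroth direct images survive. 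Note, however, that this argument nowhere uses the Hermitian hypothesis: your remark that the abelian nilradical of $\gp$ is ``essential here'' is misplaced. What the Hermitian holomorphic hypothesis buys is that $\cM_D$ is the bounded symmetric domain $G_0/K_0$ inside $\cM_Z=G/P$, so that the conditions (\ref{conditions}) and the BGG machinery of \S\ref{improved} are available at all. The top-degree half, obtained by applying the degree-zero half to $\kappa_D\otimes E^*$ and invoking Theorem~\ref{maintheorem} together with its BGG counterpart, is exactly right and mirrors Corollary~\ref{top_concentration}.

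The genuine gap concerns the de~Rham spectral sequence~(\ref{ss}). Your fallback --- ``transferring'' concentration from the BGG spectral sequence via the cancellation that produces (\ref{alternative_complex}) --- is not a valid argument: concentration is a statement about the $E_1$ page, not about the abutment, and the two resolutions of $\mu^{-1}\cO(E)$, though quasi-isomorphic, have genuinely different $E_1$ pages; nothing formal transfers $E_1$-vanishing from one to the other. Theorem~\ref{tricky_concentration} makes the danger concrete: there $\Omega_\mu^2(E)$ has composition factors with non-vanishing first direct images, and concentration holds only because these are absorbed into isomorphic zeroth direct images by a separate, delicate argument. So for (\ref{ss}) you must either show directly that every composition factor of $\Omega_\mu^p\otimes\mu^*E$ is dominant or singular along the fibers of $\nu$ (the route of Theorem~\ref{zero_concentration}; the factors of $\mu^*E$ are harmless, since they are obtained from $\lambda$ by adding negative simple roots of the Levi factor of $\gq$, which cannot decrease the coefficient over a node crossed in $\gq$, but the weights contributed by $\Omega_\mu^p$ require control), or else run an absorption argument for the exceptional factors. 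This is precisely the technical content the paper defers to~\cite{EW}; your proposal correctly locates it but does not supply it.
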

Of course, for this theorem to be useful we need an effective way to compute 
the highest weights of $\kappa_D$ and $E^*$ from that of~$E$. An efficient 
algorithm for this purpose is also given in~\cite{EW}.

\section{Outlook}

As sketched in \S\ref{improved}, we are able to say in the Hermitian
holomorphic case, when the spectral sequence concentrates in top degree. An
effective criterion is given in Theorem~\ref{besttheorem}. Furthermore, by
using the BGG resolution and the improved spectral
sequence~(\ref{full_blown_BGG_ss}) we are able to identify the range of the
transform quite explicitly: the details for the twistor correspondence are in
\S\ref{improved} and in general will appear in~\cite{EW}.

Another important case is where $\cB=G_0/K_0$ is a bounded symmetric domain but
the cycle space $\cM_D$ is not $\cB$ itself but rather $\cB\times\overline\cB$.
Our double fibration (\ref{cycles_for_M_plus_minus}) falls into this category,
generally known as the {\em Hermitian non-holomorphic case}~\cite{FHW}. In this
case, the duality of Theorem~\ref{maintheorem} means that we need only
determine when the spectral sequence is concentrated in degree zero in order to
have an effective criterion for sufficient negativity. One is confronted by
diagrams generalizing (\ref{confronted_diagram}), which may be approached as
in~\S\ref{applications}. The analogue of
Proposition~\ref{concentration_by_inspection} follows without too much
difficulty. The analogue of Theorem~\ref{tricky_concentration} is awkward but,
nevertheless, we conjecture that Theorem~\ref{besttheorem} also holds in the
Hermitian non-holomorphic case.

\bibliographystyle{amsplain}

\end{document}